\DeclareMathOperator{\Id}{Id}
\newtheorem{thm}{Theorem}[section]
\newtheorem{prop}[thm]{Proposition}
\newtheorem{cor}[thm]{Corollary}
\newtheorem{defi}[thm]{Definition}
\newtheorem{remark}[thm]{Remark}
\newcommand\xip{p}
\newcommand{\LM}[1]{\hbox{\vrule width.2pt \vbox to#1pt{\vfill \hrule width#1pt height.2pt}}}
\newcommand{\LL}{{\mathchoice{\,\LM7\,}{\,\LM7\,}{\,\LM5\,}{\,\LM{3.35}\,}}}
\newcommand\eps{\varepsilon}
\newcommand\Z{{\mathbb Z}}
\newcommand\R{{\mathbb R}}
\newcommand\dist{\mathop{{\rm dist}}}
\newcommand\calH{{\mathcal H}}
\newcommand\C{\mathbb C}
\title[ansatz-free data-driven inference]{Convergence rates for ansatz-free data-driven inference in physically constrained problems}
\author[S.~Conti, F.~Hoffmann and M.~Ortiz]{
S.~Conti${}^1$, F.~Hoffmann${}^1$ and M.~Ortiz${}^{2,3}$
}
\address
{${}^1$Institut f\"ur Angewandte Mathematik, Universit\"at Bonn, Germany \\
    ${}^2$Hausdorff Center for Mathematics, Universit\"at Bonn, Germany \\
    ${}^3$Division of Engineering and Applied Science, California Institute of Technology, Pasadena}
\begin{document}

\maketitle

\begin{abstract}
We study a Data-Driven approach to inference in physical systems in a measure-theoretic framework. The systems under consideration are characterized by two measures defined over the phase space: i) A physical likelihood measure expressing the likelihood that a state of the system be admissible, in the sense of satisfying all governing physical laws; ii) A material likelihood measure expressing the likelihood that a local state of the material be observed in the laboratory. We assume deterministic loading, which means that the first measure is supported on a linear subspace. We additionally assume that the second measure is only known approximately through a sequence of empirical (discrete) measures. We develop a method for the quantitative analysis of convergence based on the flat metric and obtain error bounds both for annealing and the discretization or sampling procedure, leading to the determination of appropriate quantitative annealing rates. Finally, we provide an example illustrating the application of the theory to transportation networks.
\end{abstract}

\section{Introduction}\label{sec:intro}

We consider the problem of inferring the probability of finding a physical system in a given state $z$ in a linear space $Z$, or phase space, which we assume to be finite-dimensional. For instance, if the system under consideration is an electrical circuit, then the state of the system consists of the array of potential differences across the elements of the circuit and the corresponding array of electric currents; if the system is a hydraulic network, then the state of the system consists of the array of head differences across each pipe and the corresponding array of mass fluxes; if the system is a mechanical truss structure, then the state of the system consists of the array of displacement differences, or strains, across each member and the corresponding array of internal forces, or stresses; {\sl et~cetera}. We note that, in all these examples, the state of the system consists of a pair of dual variables and the dimension of phase space is even.

Physical systems obey field equations, which place hard constraints on the possible states attainable by the system. These constraints are material independent and can be regarded as a restriction of the set of admissible states of the system. The view of field equations as constraints for purposes of analysis has a long-standing tradition in continuum mechanics and electromagnetism, and constitutes the foundation of recent methods of data-driven analysis \cite{kirchdoerfer2016data, conti2018data}, physically-informed neural networks (PINNs) \cite{Raissi:2017} and other applications of modern data science. Classically, deterministic problems in mathematical physics are closed by further restricting the states of the system to lie in a subset representing the material law of the system, i.~e., the locus of states attainable by a specific material.

\begin{figure}[ht]
\begin{center}
	\begin{subfigure}{0.35\textwidth}\caption{} \includegraphics[width=0.99\linewidth]{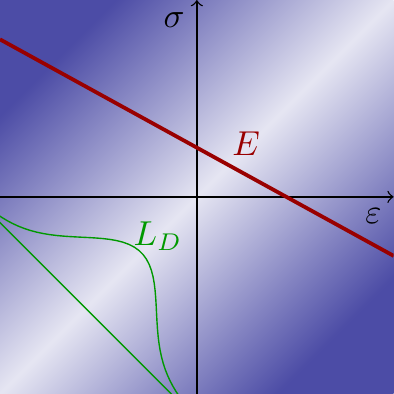}
	\end{subfigure}
    \quad\quad
	\begin{subfigure}{0.35\textwidth}\caption{} \includegraphics[width=0.99\linewidth]{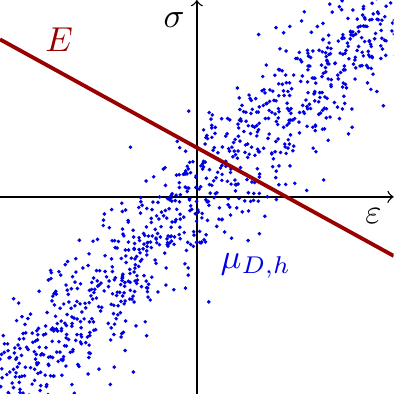}
	\end{subfigure} 
    \caption{Classical inference. a) Material likelihood function $L_D$, here in the form of a sliding Gaussian (dark: low likelihood; light: high likelihood), constraint set $E$ and likelihood function $L$ obtained by restricting $L_D$ to $E$. b) Empirical likelihood measure $\mu_{D,h}$ sampled from $L_D$.} \label{lRkk0p}
\end{center}
\end{figure}

In this paper, we work within a general framework 
\cite{Conti:2021} for systems in which the material law and the admissibility constraints are described by positive Radon likelihood measures $\mu_D \in \mathcal{M}(Z)$ and $\mu_E \in \mathcal{M}(Z)$, respectively, representing the likelihood of $y \in Z$ being a (local) material state observed in the laboratory and of $z \in Z$ being admissible. Before presenting our new contributions, the main ideas underlying the work may be summarized as follows. The admissible states of the system may be random, e.~g., due to the application of random forcing to the system. The observed material states may be random either because the material itself is random or because of experimental scatter, cf.~Fig.~\ref{lRkk0p}a. We expect the material states $y \in Z$ and admissible states $z \in Z$ of the system to be distributed according to a notion of intersection measure $\mu_D\cap\mu_E \in \mathcal{M}(Z\times Z)$, which can be qualitatively understood as the product measure $\mu_D\times\mu_E$ conditioned to $y=z$.
In the special case in which $\mu_D$ and $\mu_E$ are regular with respect to the Lebesgue measure, with continuous densities $L_D$ and $L_E$, the likelihood of finding the system at state $z \in Z$ is, simply, $L_D(z) L_E(z)$, which determines the intersection $\mu_D\cap\mu_E$. In particular, if $L_E(z) L_D(z)$ is integrable and non-zero, the expression
\begin{equation}\label{F5RerF}
    \mathbb{E}[f]
    =
    \frac
    {\int_Z  f(z) L_D(z) L_E(z) \, d\mathcal{L}^{2N}(z)}
    {\int_Z  L_D(z) L_E(z) \, d\mathcal{L}^{2N}(z)}
    \equiv
    \int_Z f(z) L(z) \, d\mathcal{L}^{2N}(z)
\end{equation}
gives the expected value of a quantity of interest $f \in C_c(Z)$. Similarly, if $\mu_D = L_D \mathcal{L}^{2N}$ with $L_D$ continuous and $\mu_E = \mathcal{H}^N \LL E$, corresponding to deterministic loading, then $\mu_D \cap \mu_E = L_D \mathcal{H}^N \LL E$, Fig.~\ref{lRkk0p}a. If $Z=\mathbb R^2$, $\mu_D=\mathcal H^1 \LL \mathbb R a$ and $\mu_E=\mathcal H^1 \LL \mathbb R b$, with $a$ and $b\in\mathbb R^2$ not parallel, then $\mu_D\cap \mu_E=\delta_0$. 

Suppose now that, as is often the case, the likelihood measure $\mu_D$ is not known exactly, but only approximately through sequences of empirical measures $(\mu_{D,h})$ obtained, e.~g., by means of material testing. Suppose further that the empirical measures supply an increasingly better approximation of $\mu_{D}$, e.~g., as a result of increasingly accurate and extensive measurements. We then may expect that, under appropriate conditions of convergence of $(\mu_{D,h})$, the sequence of approximate intersections $(\mu_{D,h}\cap \mu_{E})$ converge to the exact limiting likelihood measure $\mu_{D}\cap \mu_{E}$, thus defining a convergent approximation scheme for the inference problem.

A fundamental difficulty that arises immediately is that, for most notions of intersections of measures, the intersection of certain pairs of measures may not be well-defined or may be zero. Consider for example the setting in which both $\mu_D$ and $\mu_E$ are approximated by empirical measures $({\mu}_{D,h})$ and $({\mu}_{E,h})$. A conventional response to this challenge is to introduce Lebesgue-regular approximations $(\tilde{\mu}_{D,h})$ and $(\tilde{\mu}_{E,h})$ fitted to the data $(\mu_{D,h})$ and $(\mu_{E,h})$ by means of some method of regression. By regularity, $(\tilde{\mu}_{D,h})$ and $(\tilde{\mu}_{E,h})$ then have well-defined, {continuous} densities $\tilde{L}_{D,h}$ and $\tilde{L}_{E,h}$, respectively, and the intersections $(\tilde{\mu}_{D,h}\cap\tilde{\mu}_{E,h})$, which are intended as approximations of the exact intersection $\mu_D\cap\mu_E$, are simply given by $(\tilde{L}_{D,h}(z) \tilde{L}_{E,h}(z)){\mathcal L^{2N}(z)}$. However, there is no guarantee that this approximation will work in general and the approximations $(\tilde{\mu}_{D,h})$ and $(\tilde{\mu}_{E,h})$
need to be chosen appropriately. Here, we take a different approach using thermalizations.

Overall, there are three main cases of interest: 
\begin{enumerate}
    \item[(1)] Lebesgue-regular likelihoods;
    \item[(2)] empirical measures;
    \item[(3)] likelihood measures supported on linear subspaces.
\end{enumerate}
The general framework presented here allows for the physical likelihood and the material likelihood to be in either of these three classes independently of each other. For (2), we may consider that there is a sequence of approximating empirical measures with the limiting measure belonging to class (1) or (3). In this work, we focus on the case in which the physical likelihood $\mu_D$ is in (2) approximating (1), and the material likelihood $\mu_E$ is in (3), see~\eqref{muDh} and~\eqref{eqmuedetermintr} below. 

Whereas the measure-theoretical framework just outlined is remarkable for its directness and simplicity, a Bayesian reinterpretation of the rules of inference is often favored in the literature (cf.~\cite{Stuart:2010} and references therein). A common {\sl ansatz} is to introduce 
the representation $z=(\epsilon,\sigma)$ and a sequence of functions $g_{D,h}$, parameterized by a set of parameters $p_h$, providing the model
\begin{equation}\label{Y08FfO}
    \sigma = g_{D,h}(\epsilon; p_h) + \eta\,,
\end{equation}
where $\eta$ is a random variable, interpreted as observational noise, with likelihood $f_{D,h}(\,\cdot\,;q_h)$, parameterized by further parameters $q_h$, and to assume the approximate material likelihood to be of the form
\begin{equation}\label{49Qjt3}
    \tilde{L}_{D,h}((\epsilon,\sigma))
    =
    f_{D,h}\big(\sigma - g_{D,h}(\epsilon; p_h); q_h\big)\,.
\end{equation}
Evidently, if $f_{D,h}$ attains its maximum at $0$, then  (\ref{Y08FfO}) represents the most likely material law given the {\sl ansatz} and may thus be regarded as an identified, or learned, material model. A common choice for $g_{D,h}$ are neural networks, in the context of machine learning \cite{Herrmann:2020}, whereas a common choice of $f_{D,h}$ is Gaussian \cite{Knapik:2011, Dashti:2017}.
Common methods of regression used to determine the parameters from the data include classical methods of statistical inference such as maximum likelihood \cite{Knapik:2016, Dunlop:2020}, variational approaches based on the introduction of a loss function \cite{Stuart:2010}, measure-theoretical approaches based, e.~g., on the Wasserstein distance \cite{Bernton:2019} or the Kullback-Leibler discrepancy \cite{Pinski:2015}.

An essential problem with this approach is that the choice of material models $g_{D,h}$, observational noise $f_{D,h}$, priors, loss functions and parameterizations thereof are often not prescribed by theory or fundamental considerations but instead dictated by convenience. Worse still, the form of $f_{D,h}$ is often fixed throughout the sequence, e.~g., to be Gaussian, which renders the approximation scheme non-convergent in cases where the underlying likelihood measures $\mu_D$ and $\mu_E$ are not of the same form. Since the limiting likelihood measures $\mu_D$ and $\mu_E$ are often not known in practice, it is generally not possible to ensure that approximation schemes tied to particular choices of models and priors be convergent. In addition, it is clear that, even in the best of circumstances, representations of the form (\ref{Y08FfO}) and (\ref{49Qjt3}) introduce modeling bias and error and incur in loss of information relative to the data sets themselves.

The {\sl ansatz}-free approach of \cite{Conti:2021} adopted here leads to a direct connection between data and inference and is therefore lossless and free of modeling bias. In addition, it allows to treat unbounded likelihoods, a setting where it is not clear how to set-up a Bayesian framework that is able to address the questions of inference and approximation. Our approach overcomes the problem of unbounded likelihoods and zero intersection between the approximating likelihood measures $(\mu_{D,h})$ with $(\mu_{E})$ by recourse to thermalization and annealing. Specifically, we consider a sequence $\beta_h\to +\infty$ of reciprocal temperatures for $h\to+\infty$, and replace $\mu_h = \mu_{D,h}\times \mu_{E}$ by its thermalization
\begin{equation}\label{1AdJRO}
    \mu_{h,\beta_h}
    :=
    B_{\beta_h}^{-1} {\rm e}^{-\beta_h \|y-z\|^2}
    \mu_{h} ,
    \quad
    B_{\beta_h}
    :=
    \int_{Z}
        {\rm e}^{-\beta_h \|\xi\|^2}
    \, d\mathcal L^{{2N}}(\xi) .
\end{equation}
As $h \to \infty$, this regularization increasingly concentrates $\mu_h$ to the diagonal ${\rm diag}(Z\times Z)$ and is therefore expected to deliver the sought intersection $\mu_D\cap \mu_E$ in the limit. Suppose, for instance, that the approximate material likelihood measure is
\begin{equation}\label{muDh}
    \mu_{D,h}
    =
    \sum_{\xip\in P_h} m_\xip \delta_\xip ,
\end{equation}
where $(P_h)$ are point data sets in $Z$ and $m_\xip^{h} \ge 0$ are weights. Suppose, in addition, that the loading is deterministic,
 \begin{equation}\label{eqmuedetermintr}
    \mu_E = \mathcal{H}^N \LL E ,
\end{equation}
where $E$ is an affine subspace of $Z$ of dimension $N$ and $\mathcal{H}^N \LL E$ is the Hausdorff measure restricted to $E$. Then, the approximate expectation of a quantity $f \in C_b(Z\times Z)$ corresponding to (\ref{1AdJRO}) is, cf.~Section~\ref{h9Mes5},
\begin{align}\label{w9dT8P}
    \mathbb{E}_h[f]
    &=
    \frac
    {
        \sum_{\xip\in P_h}
            \int_E
                m_\xip B_{\beta_h}^{-1}
                {\rm e}^{-\beta_h \|\xip -z\|^2}
                f(\xip,z)
            \, d\calH^N(z)
    }
    {
        \sum_{\xip\in P_h}
            \int_E
                m_\xip B_{\beta_h}^{-1}
                {\rm e}^{-\beta_h \|\xip -z\|^2}
            \, d\calH^N(z)
    } ,
\end{align}
 which is explicit in the data and eschews the need for {\sl ans\"atze} of any type, be they material models or priors. We do not consider the definition of the constraint set $E$ as a modelling step given that it encodes the governing physical laws. Data-driven inference rules such as (\ref{w9dT8P}) are amenable to efficient numerical implementation in combination with stochastic quadrature formulas for the evaluation of the integrals \cite{Prume:2022}.

However, the analysis of \cite{Conti:2021}, based on the concept of weak convergence, is not quantitative. It does not permit to obtain convergence rates, both for the convergence of $\mu_\beta$ to some limit $\mu_\infty$ and for the convergence of $\mu_{h,\beta}$ to $\mu_\beta$. In particular, it is not clear the thermalization parameter $\beta_h$ in \eqref{w9dT8P} should be chosen in practice.

\subsection{Main Results}
\label{secmainresults}
Our aim is to obtain quantitative estimates for the convergence of $\mu_\beta$ to its limit $\mu_\infty$ and the convergence of $\mu_{h,\beta}$ to $\mu_\beta$, leading in particular to a prescription for the choice $\beta_h$ which ensures the desired convergence $\mu_{h,\beta_h} \to \mu_\infty$. In order to make  convergence quantitative, we work in a metric setting and not only in terms of weak convergence as in \cite{Conti:2021}. Our starting point is this observation:
\begin{itemize}
 \item The flat norm metrizes weak convergence on bounded and tight sets of measures.
\end{itemize}
We adopt, in this metric setting, the notion of transversality  and of diagonal concentration for (possibly unbounded) measures via thermalization as developed in \cite{Conti:2021}. We  introduce a weaker concept, \emph{weak transversality}, which corresponds to transversality along subsequences, see Definition~\ref{def:diag}, and circumvents the need for regularity assumptions on the measure $\mu$. Then, Prokhorov's theorem shows that:
\begin{itemize}
    \item If $\mu$ is such that the measures $\mu_\beta$ are uniformly bounded and uniformly tight, then it is weakly transversal and, in particular, it has one or more diagonal concentrations.
\end{itemize}
Having framed the thermalization problem in a  metric setting, we may make use of standard devices such as uniform convergence and diagonal subsequences. This permits to decouple the thermalization problem ($\beta\to\infty$) from the approximation problem ($h\to\infty$). A typical statement (not detailed here) is:
\begin{itemize}
 \item 
Assume that i) $\mu_h$ are uniformly transversal and ii) $\mu_h$ have thermalizations that are uniformly bounded, uniformly tight and uniformly approximate the thermalization of $\mu$. Then $\mu$ is transversal and its diagonal concentration is approximated by the diagonal concentrations of $\mu_h$.
\end{itemize}
In particular, the diagonal concentration of an unknown measure $\mu$ is recovered from the diagonal concentrations of an approximating sequence of measures. 

We demonstrate the usefulness of this abstract framework by considering specific classes of measures. We first focus on sub-Gaussian material likelihoods combined with a deterministic physical likelihoods. Specifically, we consider measures of the form
\begin{equation}\label{subgauss}
\mu={\rm e}^{-\Phi} \mathcal{L}^{2N} \times \mathcal{H}^{N}\LL E
\end{equation}
for some $N$-dimensional affine subspace $E$ of $Z$ and $\Phi:Z\to\R$ satisfying
\begin{equation}\label{trans-cond} 
    \beta_0 \| y - z \|^2
    +
    \Phi(y)
    \geq 
    c
    \big(
        \| y \|^2 + \| z \|^2
    \big)
    -
    b 
    \hskip5mm
    {\text{ for all $y\in Z$, $z\in E$}} 
\end{equation}
for some constants $\beta_0 > 0$, $c > 0$ and $b > 0$. Condition~\eqref{trans-cond} can be interpreted as a transversality condition in view of the following result.

\begin{thm}[informal, see Prop.~\ref{p5qmD1}
and Prop.~\ref{VkH97k}]\label{thm:therm}
Measures $\mu$ satisfying \eqref{subgauss}-\eqref{trans-cond} are weakly transversal and admit a diagonal concentration. Further, if $\Phi\in C^1$ and its derivative does not grow too fast,
then $\mu$ is strongly transversal and the thermalizations $\mu_\beta$ of $\mu$ converge to the diagonal concentration $\mu_\infty$ with rate $\beta^{-1/2}$.
\end{thm}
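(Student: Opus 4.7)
I would first handle the qualitative Part 1, which reduces to verifying that $\{\mu_\beta\}_{\beta \ge \beta_0}$ is uniformly bounded and uniformly tight on $Z \times E$: by the framework recalled in the introduction, Prokhorov in the flat norm then delivers a subsequential limit that the transversality machinery of \cite{Conti:2021} identifies as a diagonal concentration. The key algebraic input is that \eqref{trans-cond} can be rearranged, for $\beta \ge 2\beta_0$, as
\[
\beta\,\|y-z\|^2 + \Phi(y) \;\ge\; \tfrac{\beta}{2}\,\|y-z\|^2 + c\bigl(\|y\|^2 + \|z\|^2\bigr) - b .
\]
Multiplied by $B_\beta^{-1}$ and integrated in $y$, the first term on the right contributes a narrow Gaussian of $\beta$-independent mass, while the remaining Gaussian factors in $y$ and $z$ yield a uniform mass bound
\[
\mu_\beta(Z\times E) \;\le\; C \int_E e^{-c'\|z\|^2}\, d\mathcal{H}^N(z) \;<\; \infty
\]
together with uniform tightness outside large balls.

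\textbf{Rate $\beta^{-1/2}$ under the $C^1$ hypothesis.} Under the additional regularity I would change variables $y = z + \xi/\sqrt{\beta}$, which together with $B_\beta = (\pi/\beta)^N$ cancels all $\beta$-dependent prefactors and produces the identity
\[
\int \varphi\, d\mu_\beta
\;=\; \pi^{-N} \int_E \int_Z \varphi\bigl(z+\tfrac{\xi}{\sqrt{\beta}},\, z\bigr)
\, e^{-\|\xi\|^2}\, e^{-\Phi(z+\xi/\sqrt{\beta})}\, d\xi\, d\mathcal{H}^N(z) .
\]
Formally sending $\beta\to\infty$ identifies the candidate limit $\mu_\infty$ as the pushforward of $e^{-\Phi}\mathcal{H}^N\LL E$ under the diagonal embedding $z\mapsto(z,z)$. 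For the flat-norm rate I would test against $\varphi$ with $\|\varphi\|_\infty + \mathrm{Lip}(\varphi)\le 1$ and apply the mean value theorem to both $\varphi$ and $\Phi$, producing a pointwise error of size
\[
\tfrac{\|\xi\|}{\sqrt{\beta}}\,\bigl(1 + |\nabla\Phi(z+t\xi/\sqrt{\beta})|\bigr)\, e^{-\Phi(z+\xi/\sqrt{\beta})}
\]
for some $t\in(0,1)$, which upon integration against $\pi^{-N} e^{-\|\xi\|^2}\, d\xi\, d\mathcal{H}^N(z)$ is of order $\beta^{-1/2}$.

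\textbf{Main obstacle.} The delicate point is making this last estimate uniform in $\xi$: for $\|\xi\|$ of order $\sqrt{\beta}$ the argument of $\nabla\Phi$ can stray arbitrarily far from $z$, so a naive Taylor bound loses the rate. The remedy is to pair the Gaussian weight $e^{-\|\xi\|^2}$ with the Gaussian-type lower bound on $\Phi$ coming from \eqref{trans-cond}, so that the exponential decay $e^{-\Phi(z+\xi/\sqrt{\beta})}$ absorbs any polynomial blow-up of $|\nabla\Phi|$; this is precisely why the hypothesis ``$|\nabla\Phi|$ does not grow too fast'' must be read as requiring the growth of $|\nabla\Phi|$ to be strictly controlled by the Gaussian exponent $c$ in \eqref{trans-cond}. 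Once that polynomial-versus-Gaussian accounting is carried through, the quantitative estimate forces uniqueness of the subsequential limit, which upgrades weak transversality to strong transversality and gives convergence of the full family $\mu_\beta\to\mu_\infty$ with rate $\beta^{-1/2}$.
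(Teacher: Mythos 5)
Your proposal is correct in substance and, for Part 1, follows essentially the paper's route (rearrange \eqref{trans-cond}, cancel the Gaussian in $y$ against $B_\beta^{-1}$, apply Prokhorov); the only difference is that the paper passes first to orthogonal coordinates $(\xi,\eta,\zeta)\in E\times E_0^\perp\times E_0$ and controls tightness by splitting into $\{\|\xi\|>R\}$ and $\{\|\eta+\zeta\|>R\}$, which is a modest refinement of your ``outside large balls.''

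For the rate estimate, you take a genuinely different, more direct route: you scale the full displacement $y-z$ by $\sqrt\beta$ and compare to the diagonal point $(z,z)$ with $z\in E$. The paper instead works in the coordinates $(\xi,\eta,\zeta)$, rescales only $(\eta,\zeta)$, and compares to the diagonal point $(\xi,\xi)$, where $\xi=P_E y$. This is not a cosmetic distinction. In your comparison the increment in the argument of $\Phi$ has a component along $E_0$ (namely $P_{E_0}\xi/\sqrt\beta$), so the mean value theorem invokes the full gradient $\nabla\Phi$. In the paper's comparison the argument of $\Phi$ changes only along $E_0^\perp$ (the $\eta$ direction), so only $D_2\Psi(\xi,\eta)=D_\eta\Phi(\xi+\eta)$ appears; this is why Prop.~\ref{VkH97k} only assumes $\Psi(\xi,\cdot)$ differentiable and a growth bound on $D_2\Psi$, allowing $\Phi$ to be rough along $E$. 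Both approaches reach $\beta^{-1/2}$, and both use \eqref{trans-cond} to turn $e^{-\Phi}$ at the intermediate point into a factor $e^{-c\|z\|^2}e^{\|\xi\|^2/2}$ so that the residual Gaussian $e^{-\|\xi\|^2/2}$ absorbs any polynomially growing $|\nabla\Phi|$; your diagnosis of the ``main obstacle'' and its remedy is exactly the point of the computation $-\Psi(\xi,t\eta)\le b+\beta_0\|\eta\|^2+\beta_0\|\zeta\|^2-\tfrac c4\|\xi\|^2$ in the paper. Your version proves the informal theorem as stated ($\Phi\in C^1$ with controlled gradient), but is slightly weaker than the paper's proposition. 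One further small gain of the paper's hypothesis: the growth bound \eqref{ceBj5Z} permits a tangential factor $u(\xi)$ that need not be polynomial, only integrable against $e^{-c\|\xi\|^2/4}$ on $E$; your reading ``growth strictly controlled by $c$'' is the right idea but a bit coarser. Finally, the closing argument ``quantitative estimate forces uniqueness of subsequential limits, hence strong transversality'' is unnecessary: once \eqref{Kgx9hb} is proved against the candidate $\mu_\infty$ from \eqref{8ddOkTdm}, convergence of the full family is immediate.
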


We remark that, for fully deterministic systems, the potential $\Phi$ is the indicator function of a set $D \subset Z$, in which case \eqref{trans-cond} corresponds precisely to the definition of transversality introduced in \cite{conti2018data}. 

Next, we consider the case in which the material likelihood measure $\mu_D$ is known only approximately through a sequence of empirical measures. We assume control both on the spatial resolution of the approximation (quantified by $\epsilon_h$) and on the weights (quantified by $\delta_h$).

\begin{thm}[informal, see Cor.~\ref{y1TP50}]\label{thm:approx}
Assume that for each $h>0$ we can find a partition $Z=\cup_{p\in P_h} A_p$, a parameter $\epsilon_h>0$ that captures how well the discretization approximates $\mu_D$ and a parameter $\delta_h>0$ that captures how well $m_p$ approximates the mass $\mu_D(A_p)$.
If $\epsilon_h,\delta_h \to 0$ and $\beta_h\to \infty$ as $h\to \infty$ in such a way that $\beta_h\epsilon_h^2$ is bounded, then 
\begin{equation}\label{therm-approx}
    \| \mu_{h,\beta_h} - \mu_{\beta_h} \|_{\rm FN}
    \leq C
(\delta_h + \beta_h^{1/2}\epsilon_h)
\sup_{\beta\ge\beta_0} \|\mu_\beta\|_{\rm TV}\,,
\end{equation}
provided $\mu$ has uniformly bounded and uniformly tight thermalizations.
\end{thm}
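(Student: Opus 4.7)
\emph{Plan.} By duality $\|\nu\|_{\mathrm{FN}} = \sup\{\int f\,d\nu : \|f\|_\infty,\mathrm{Lip}(f)\leq 1\}$, it suffices to control $\int f\,d(\mu_{h,\beta_h}-\mu_{\beta_h})$ uniformly over such test functions $f\in C_b(Z\times Z)$. Introducing $K_\beta(y,z):=B_\beta^{-1}e^{-\beta\|y-z\|^2}$ and the reduced test function $F(y):=\int_E f(y,z)K_{\beta_h}(y,z)\,d\calH^N(z)$, the two thermalizations differ only through their $\mu_D$-factor, so
\[
\int f\,d(\mu_{h,\beta_h}-\mu_{\beta_h})=\int_Z F(y)\,d(\mu_{D,h}-\mu_D)(y).
\]
Using $\mu_{D,h}=\sum_{p\in P_h}m_p\delta_p$ together with the partition $Z=\cup_p A_p$, we decompose this as a \emph{mass error} $\sum_p(m_p-\mu_D(A_p))F(p)$ plus a \emph{position error} $\sum_p\int_{A_p}(F(p)-F(y))\,d\mu_D(y)$, and estimate each separately.

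\emph{Mass error.} Completing the square yields $G(y):=\int_E K_{\beta_h}(y,z)\,d\calH^N(z)=(\beta_h/\pi)^{N/2}e^{-\beta_h\dist(y,E)^2}$, and $|F|\leq G$ pointwise by $\|f\|_\infty\leq 1$. Invoking the hypothesis that $\delta_h$ dominates the cellwise relative discrepancy $|m_p-\mu_D(A_p)|/\mu_D(A_p)$ (the sense in which $\delta_h$ measures how well $m_p$ approximates $\mu_D(A_p)$), we obtain $|\text{mass error}|\leq \delta_h\sum_p\mu_D(A_p)G(p)$. A Riemann-sum comparison, justified by the slow variation of $G$ on cells of diameter $\epsilon_h$, identifies $\sum_p\mu_D(A_p)G(p)$ with $\int_Z G\,d\mu_D=\|\mu_{\beta_h}\|_{\mathrm{TV}}$ up to a $\beta_h^{1/2}\epsilon_h$-type error absorbed into the position contribution.

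\emph{Position error, and the key kernel estimate.} For $y\in A_p$, using $\mathrm{Lip}(f)\leq 1$ we split
\[
|F(p)-F(y)|\leq \epsilon_h G(p)+\int_E|K_{\beta_h}(p,z)-K_{\beta_h}(y,z)|\,d\calH^N(z).
\]
The first summand integrates against $\mu_D$ to $\lesssim\epsilon_h\|\mu_{\beta_h}\|_{\mathrm{TV}}$. For the second, the mean value theorem reduces the task to bounding $\int_E|\nabla_\xi K_{\beta_h}(\xi,z)|\,d\calH^N(z)=\int_E 2\beta_h\|\xi-z\|\,K_{\beta_h}(\xi,z)\,d\calH^N(z)$. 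A direct bound would cost a factor $\beta_h$; the improvement to $\beta_h^{1/2}$ rests on the sub-Gaussian identity $\sup_{s\geq 0}\sqrt{s}\,e^{-s/2}<\infty$, applied after splitting one Gaussian factor in half to absorb the distance $\|\xi-z\|$:
\[
\int_E|\nabla_\xi K_\beta(\xi,z)|\,d\calH^N(z)\leq C\beta^{1/2}G_{\beta/2}(\xi),\qquad G_{\beta/2}(\xi):=(\beta/2\pi)^{N/2}e^{-(\beta/2)\dist(\xi,E)^2}.
\]
Integrating over $\mu_D$, the second summand contributes $\lesssim\epsilon_h\beta_h^{1/2}\|\mu_{\beta_h/2}\|_{\mathrm{TV}}$, which for $\beta_h\geq 2\beta_0$ is dominated by $\epsilon_h\beta_h^{1/2}\sup_{\beta\geq\beta_0}\|\mu_\beta\|_{\mathrm{TV}}$. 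Combining with the mass-error bound yields \eqref{therm-approx}.

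\emph{Main obstacle.} The crucial technical step is the $\beta^{1/2}$ (rather than $\beta$) estimate on the $\calH^N\LL E$-averaged gradient of $K_\beta$. This is precisely what determines the annealing-rate condition $\beta_h\epsilon_h^2$ bounded, and the half-temperature shift intrinsic to the sub-Gaussian trick is why \eqref{therm-approx} features the supremum $\sup_{\beta\geq\beta_0}\|\mu_\beta\|_{\mathrm{TV}}$ rather than the single value $\|\mu_{\beta_h}\|_{\mathrm{TV}}$.
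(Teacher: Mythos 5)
Your decomposition into mass and position errors, the reduced kernels $F(y)=\int_E f(y,z)K_{\beta_h}(y,z)\,d\calH^N(z)$ and $G(y)=\int_E K_{\beta_h}(y,z)\,d\calH^N(z)$, and in particular the $\beta^{1/2}$ estimate obtained from $\sqrt{s}\,e^{-s/2}\le C$ with the half-temperature shift are exactly the route taken in the paper's proof of Cor.~\ref{y1TP50} (there the trick appears as $t\,e^{-t^2}\le e^{-t^2/2}$, applied to $e^{-\beta\|p-z\|^2}-e^{-\beta\|y-z\|^2}$ before integrating over $E$ rather than after). So this is essentially the same proof, modulo a repackaging via $F,G$.

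There is, however, one place where your justification as stated is wrong and needs to be replaced. In the mass-error step you write that the Riemann-sum comparison between $\sum_p\mu_D(A_p)G(p)$ and $\int_Z G\,d\mu_D$ is ``justified by the slow variation of $G$ on cells of diameter $\epsilon_h$.'' But $G(y)=(\beta_h/\pi)^{N/2}e^{-\beta_h\dist(y,E)^2}$ does \emph{not} vary slowly multiplicatively on a cell: for $y\in A_p$ the ratio $G(p)/G(y)=e^{-\beta_h(\dist(p,E)^2-\dist(y,E)^2)}$ can be as large as $e^{2\beta_h\epsilon_h\dist(y,E)+\beta_h\epsilon_h^2}$, which is unbounded in $y$ even when $\beta_h\epsilon_h^2$ is bounded. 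The paper circumvents this by \emph{not} comparing to $\|\mu_{\beta_h}\|_{\rm TV}$ at all: it bounds $A^h\le\frac{\delta_h}{1-\delta_h}\|\mu_{h,\beta_h}\|_{\rm TV}$ directly in terms of the discrete TV norm, and then controls $\|\mu_{h,\beta_h}\|_{\rm TV}$ by $C\,\|\mu_{\beta_h/2}\|_{\rm TV}$ via the Jensen-inequality bound \eqref{3VyqRFa}--\eqref{eqbdmuhbetahdf} from Prop.~\ref{D7NtRs}(a) — that is exactly where a compensating half-temperature shift is extracted, rather than through pointwise slow variation. Your framing (``absorbed into the position contribution'') points in the right direction, since the discrepancy $\sum_p\int_{A_p}(G(p)-G(y))\,d\mu_D(y)$ has the same structure as the position error and can be controlled by the same gradient bound plus a temperature halving, but the argument actually has to be run, not invoked via slow variation. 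Relatedly, your position-error estimate evaluates $G_{\beta_h/2}$ at the intermediate MVT point $\xi\in[y,p]$ and then silently integrates $G_{\beta_h/2}(y)$ against $\mu_D$; passing from $\xi$ to $y$ costs a further half-temperature shift that needs $\beta_h\epsilon_h^2$ bounded (and, in the paper, is handled by Assumption~\ref{itbdtightcst}, $\|\xip-z\|^2\le c_*^2[\epsilon_h^2+\|y-z\|^2]$, which your sketch never invokes). Both gaps are fixable with the paper's own devices, but neither is a routine Riemann-sum fact.
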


In practice, the parameter $\epsilon_h$ is determined from the experimental data sets. Theorem~\ref{thm:approx} shows how to choose the cooling schedule $\beta_h$ depending on $\epsilon_h$. For the case of sub-Gaussian measures of the form \eqref{subgauss}-\eqref{trans-cond}, we can make this analysis quantitative. Choosing $m_p=\mu_D(A_p)$, or $\delta_h=0$, Th.~\ref{thm:therm} and Th.~\ref{thm:approx} give
\begin{align}
\label{eqintrooptbeta} 
    \| \mu_{h,\beta_h} - \mu_\infty \|_{\rm FN} 
    &\le  \| \mu_{\beta_h} - \mu_\infty \|_{\rm FN}  + 
    \| \mu_{h,\beta_h} - \mu_{\beta_h} \|_{\rm FN}
    \leq 
    \frac{C_1}{\beta_h^{1/2}} 
    +
    C_2 \beta_h^{1/2} \epsilon_h ,
\end{align}
for some constants $C_1,C_2>0$ and $\beta_h$ sufficiently large.
The optimal cooling schedule is therefore $ \beta_h \sim \epsilon_h^{-1}$ and the convergence rate in the flat norm is $\sim\beta_h^{-1/2}$. 

In summary, the analysis in this work ensures convergence---at a well-defined rate---of point-data approximations, should said point data $(P_h)$ be obtained by means of well-designed experiments supplying an increasingly faithful representation of the underlying likelihood measure $\mu_D$. 
In practice however, the underlying  material likelihood measure $\mu_D$ may be unknown and the assumptions in the rigorous statement of Theorem~\ref{thm:approx} cannot be verified directly from the data. This epistemic bottleneck notwithstanding, we note that the convergence of the thermalized approximations $\mu_{h,\beta_h}$ can in principle be assessed by determining if the sequence is Cauchy, since that criterion does not require explicit knowledge of the limit. A detailed assessment of matters of implementation is beyond the scope of the present paper and may be found in \cite{Prume:2022}. 

Background material and the general problem formulation are introduced in Section~\ref{sec:problemstatement}. The abstract strategy for solving inference problems via thermalization and formulating approximations thereof is enunciated in Section~\ref{U0KYmj}. In Section~\ref{kA7NJU} we address approximation and convergence of material point-data sets. Finally, in Section~\ref{Waxd2V} we describe a representative example. 

\section{Problem formulation}\label{sec:problemstatement}

\subsection{Prolegomena}\label{1kFcm9}

We denote by $\mathcal{M}(\mathbb{R}^n)$ the set of {real-valued} Radon measures, {write $|\mu|$ for the total variation of the measure $\mu$, and sometimes shorten} $\|\mu\|_{\mathrm {TV}}:=|\mu|(\mathbb{R}^n)$. 
In addition, we denote by $\mathcal{M}_b(\mathbb{R}^n) := \{\mu\in\mathcal{M}(\mathbb{R}^n): |\mu|(\mathbb{R}^n) < \infty\}$ the set of bounded {real-valued} Radon measures.  {We denote by $\mathcal{M}^+(\mathbb{R}^n)$ and $\mathcal{M}^+_b(\mathbb{R}^n)$, respectively, the corresponding sets of non-negative measures.} We say that $\mu_k\rightharpoonup\mu$ in $\mathcal{M}(\mathbb{R}^n)$ in the wide {(or weak-*)} topology if
\begin{equation}\label{eqdefweakc0}
    \lim_{k\to\infty}\int_{\mathbb{R}^n} f d\mu_k
    =
    \int_{\mathbb{R}^n} f d\mu \hskip1cm
    \text{ for all } f\in C_c(\mathbb{R}^n) ,
\end{equation}
with $C_c(\mathbb{R}^n)$ the set of continuous functions over $\mathbb{R}^n$ with compact support. In addition, we say that $\mu_k\rightharpoonup\mu$ in $\mathcal{M}_b(\mathbb{R}^n)$  {in the narrow topology} if
\begin{equation}\label{eqdefweakbounded}
    \lim_{k\to\infty}\int_{\mathbb{R}^n} f d\mu_k
    =
    \int_{\mathbb{R}^n} f d\mu \hskip1cm
    \text{ for all } f\in C_b(\mathbb{R}^n),
\end{equation}
with $C_b(\mathbb{R}^n)$ the set of bounded continuous functions.

We remark that (\ref{eqdefweakbounded}) is the narrow convergence normally used for probability measures, which is stronger than weak convergence of finite Radon measures usually defined testing with $f\in C_0(\mathbb{R}^n)$. Condition \eqref{eqdefweakbounded} in particular (testing with $f=1$) implies $\mu_k(\R^n)\to\mu(\R^n)$. Indeed, it is possible to show that, for nonnegative measures, condition \eqref{eqdefweakbounded} is equivalent to \eqref{eqdefweakc0} and $\mu_k(\R^n)\to \mu(\R^n)<\infty$. 

We say that a subset $M\subseteq\mathcal M_b(\mathbb{R}^n)$ is uniformly tight if for every $\epsilon>0$ there is a compact set $K_\epsilon$ such that, for all $\mu\in M$, ${|\mu|}(\mathbb{R}^n \setminus K_\epsilon)<\epsilon$. The subset $M$ is uniformly bounded if there is $C>0$ such that, for all $\mu\in M$, ${|\mu|}(\mathbb{R}^n)<C$. By Prokhorov's theorem, sets that are uniformly tight and uniformly bounded are sequentially precompact with respect to the {narrow} convergence in $\mathcal M_b(\mathbb{R}^n)$.

The space $BL(\mathbb{R}^n)$ of bounded Lipschitz functions is a Banach space with norm
\begin{equation}
    \|f\|_{\rm BL} := \max\{\| f \|_\infty,\, \ell\, {\rm Lip}(f) \},
\end{equation}
where we introduce a fixed dimensional constant $\ell>0$. We recall that the narrow topology of $\mathcal{M}_b(\mathbb{R}^n)$ {as defined in \eqref{eqdefweakbounded}} is metrized by the flat norm
\begin{equation}\label{BDrq89}
    \| \mu \|_{\rm FN}
    :=
    \sup
    \Big\{
         \int_{\mathbb{R}^n} f \, d\mu  \, : \,
        \|f\|_{\rm BL} \leq 1
    \Big\} ,
\end{equation}
on uniformly tight and uniformly bounded subsets (cf., e.~g., \cite[Th.~2.7(i)]{Gwiazda:2010}). 

\subsection{Classical inference} \label{Ef19yX}

We consider systems whose state is represented by points in a finite-dimensional space $Z$, or phase space. It is equipped with a scalar product and corresponding norm $\|\cdot\|$ and is identified with $\mathbb{R}^{2N}$. The systems are characterized by two measures over $Z$: i) a physical likelihood measure $\mu_E$ expressing the likelihood that a state of the system be admissible; ii) a material likelihood measure $\mu_D$ expressing the likelihood that a local state of the material be observed in the laboratory (we ignore here the subtle question of the characterization of the random experimental setup which could lead to those observations).
In this paper, we focus on the case of deterministic loading, in the sense of \eqref{eqmuedetermintr}, and consider the measure $\mu = \mu_D \times \mu_E = \mu_D\times \calH^N\LL E$ on $Z\times Z$, which fully characterizes the system both as regards material behavior and constraints. The classical inference problem is then to determine the likelihood of observing a material state $y \in Z$ and an admissible state $z \in Z$ conditioned to $y=z$. As a result, we expect the likelihood measure solving the inference problem to be a certain concentration of $\mu$ to the diagonal ${\rm diag}(Z \times Z)$.  

Whereas neither $\mu_D$ nor $\mu_E$ are finite in general, we expect the intersection $\mu_D \cap \mu_E$ as defined in this work to be finite and non-degenerate in the cases of interest, i.~e., $0<\|\mu_D \cap \mu_E\|_{\rm TV} < +\infty$, cf.~Fig.~\ref{lRkk0p}a. In particular, $\mu_D \cap \mu_E$ can be normalized by $\|\mu_D \cap \mu_E\|_{\rm TV}$ to define a probability measure characterizing the expectation of outcomes of the system. The condition that the intersection $\mu_D \cap \mu_E$ be well-defined, finite and non-degenerate sets forth a general notion of {\sl transversality} between the measures $\mu_D$ and $\mu_E$.

\section{Thermalization}\label{U0KYmj}

We introduce a notion of thermalization and diagonal concentration that extends the one 
proposed in \cite{Conti:2021}.

\subsection{General setting}
Starting from a general measure $\mu \in \mathcal{M}^+(Z\times Z$), for $\beta>0$ we define the {\sl thermalized} measure
\begin{equation}\label{3pSTen}
    \mu_\beta
    :=
    w_\beta \, \mu ,
    \quad
    w_\beta(y,z)
    :=
    B_\beta^{-1} {\rm e}^{-\beta \|y-z\|^2} ,
    \quad
      B_{\beta}
    :=
    \int_{Z}
        {\rm e}^{-\beta \|\xi\|^2}
    \, d\mathcal L^{{2N}}(\xi)
    =\beta^{-N}B_1    .
\end{equation}
Evidently, the weight $w_\beta$ suppresses the contributions away from the diagonal as $\beta \to \infty$, which suggests identifying all possible diagonal concentrations of $\mu$ with the weak limits of $\mu_\beta$, if any. We formalize this identification as follows.

\begin{defi}[Diagonal concentration, transversality]\label{def:diag}
Let $\mu \in \mathcal{M}^+(Z\times Z)$. We say that $\mu_\infty \in \mathcal{M}_b^+(Z\times Z)$ is a \emph{diagonal concentration} of $\mu$ if there is a sequence $\beta_h \to +\infty$ such that  $\mu_{\beta_h} \in \mathcal{M}_b^+(Z\times Z)$ for all $h$ and $\mu_{\beta_h} \rightharpoonup \mu_\infty$ in the narrow topology. We say that $\mu$ is \emph{weakly transversal} if it has a diagonal concentration.

We say that $\mu_\infty$ is a 
\emph{strong diagonal concentration}, and $\mu$ is 
\emph{strongly transversal} (or simply \emph{transversal}) if 
$\mu_{\beta} \in \mathcal{M}_b^+(Z\times Z)$ for all $\beta$ sufficiently large and $\mu_{\beta} \rightharpoonup \mu_\infty$. 

If $\mu,\nu \in \mathcal{M}^+(Z)$ are such that
$\mu\times \nu$ is strongly transversal, then we say that its (strong) diagonal concentration is the \emph{intersection} of $\mu$ and $\nu$, denoted by $\mu\cap \nu\in \mathcal{M}_b^+(Z\times Z)$.
\hfill$\square$
\end{defi}

The notion of (strong) transversality was introduced in  \cite[Def.~4.1]{Conti:2021}; the weaker notion of weak transversality is new. Both are different from the use of the term ``transversal'' in \cite{conti2018data}. We note that a measure $\mu \in \mathcal{M}^+(Z\times Z)$ can have several diagonal concentrations, or none at all. In the former case, the inference problem set forth by $\mu_D\times \mu_E$ has multiple solutions, whereas in the latter it has no solution. 
The theoretical background recalled in Section~\ref{1kFcm9} leads to some noteworthy consequences:
\begin{remark} ${}$ 
\begin{enumerate}
    \item  If the sequence $\mu_{\beta_h}$ (or $\mu_\beta$) is uniformly bounded and uniformly tight, then the narrow convergence can be equivalently replaced by strong convergence in the flat norm.
    \item By Prokhorov's theorem, if a sequence $\mu_{\beta_h}$ is uniformly bounded and uniformly tight then it has a subsequence which converges narrowly and $\mu$ is weakly transversal.
    \item If a diagonal concentration exists, then it is supported on ${\rm diag}(Z\times Z)$ (see \cite[Lemma~4.3]{Conti:2021}).
    \item If $\mu_\beta$ is uniformly bounded, uniformly tight and Cauchy with respect to the flat norm, then it is strongly transversal.
\end{enumerate}
\end{remark}
These observations evince that the property that $\mu_\beta$ be uniformly bounded and uniformly tight plays a crucial role in analysis.

\subsection{Sub-Gaussian likelihood measures}\label{81jQY7}
We discuss transversality in further detail in the case that $\mu_D$ is sub-Gaussian, in the sense made precise below.

\begin{prop}[Transversal likelihood functions, existence]\label{p5qmD1}
{Suppose that 
\begin{equation}\label{eqdefmusubg}
\mu={\rm e}^{-\Phi} \mathcal{L}^{2N} \times \mathcal{H}^{N}\LL E ,
\end{equation}
for some Borel function $\Phi:Z\to\R$
and some $N$-dimensional affine subspace $E$ of $Z$.}
Assume that  there exist constants $\beta_0 > 0$, $c > 0$ and $b > 0$ such that
\begin{equation}\label{9z2bLs} 
    \beta_0 \| y - z \|^2
    +
    \Phi(y)
    \geq 
    c
    \big(
        \| y \|^2 + \| z \|^2
    \big)
    -
    b 
    \hskip5mm
    {\text{ for all $y\in Z$, $z\in E$}} 
    .
\end{equation}
Then, the set $M := \{\mu_\beta,\ \beta\geq 2\beta_0\}$  is uniformly bounded  and uniformly tight in $\mathcal{M}_b(Z\times Z)$. In particular, $\mu$ is weakly transversal and has a diagonal concentration.
\end{prop}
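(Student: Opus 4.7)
The plan is to reduce uniform boundedness and uniform tightness to explicit Gaussian integrals over the decomposition $Z = V \oplus V^{\perp}$, where $E = e_0 + V$ is the affine subspace, using the transversality hypothesis \eqref{9z2bLs} to exchange part of the exponent $-\beta\|y-z\|^2-\Phi(y)$ for a coercive quadratic term in $\|y\|^2+\|z\|^2$.

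More precisely, for $\beta\ge 2\beta_0$ I write $\beta = \beta_0 + (\beta-\beta_0)$ with $\beta-\beta_0 \ge \beta_0 > 0$, and apply \eqref{9z2bLs} to the $\beta_0$-piece to obtain the pointwise bound
\begin{equation*}
e^{-\beta\|y-z\|^2-\Phi(y)} \;\le\; e^{b}\, e^{-c(\|y\|^2+\|z\|^2)}\, e^{-(\beta-\beta_0)\|y-z\|^2}\quad\text{for all } y\in Z,\ z\in E.
\end{equation*}
This is the key trick: it decouples a Gaussian confinement (the factor $e^{-c(\|y\|^2+\|z\|^2)}$, independent of $\beta$) from a residual concentrator onto the diagonal whose $\beta$-dependence is explicit.

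For uniform boundedness I would integrate the right-hand side against $d\mathcal{L}^{2N}(y)\,d\mathcal{H}^N\LL E(z)$. After translating so that $e_0 \in V^{\perp}$, writing $y = y_V + y_\perp \in V\oplus V^\perp$ and parametrising $z \in E$ as $z = e_0 + v$ with $v \in V$, the perpendicular component of $y-z$ equals $y_\perp - e_0$ while the parallel component is $y_V - v$. The residual Gaussian $e^{-(\beta-\beta_0)\|y-z\|^2}$ factorises accordingly, and explicit Gaussian integration against the shifted Gaussian weight $e^{-c(\|y_V\|^2+\|y_\perp\|^2+\|v+e_0\|^2)}$ gives an upper bound of order $(\beta-\beta_0)^{-N/2}$ for the integral in the parallel variables (the perpendicular and $V$-variables contribute $\beta$-independent constants by completing the square, since $\beta-\beta_0 \ge \beta_0$). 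Recalling $B_\beta^{-1} = \beta^N/B_1$ and that only the parallel direction in $E$ produces the decaying factor $\beta^{-N/2}$, while the perpendicular direction in $V^\perp$ already carries a Gaussian uniformly in $\beta$, the two factors of $\beta^{N/2}$ cancel and $\|\mu_\beta\|_{\mathrm{TV}}$ is bounded by a constant depending only on $b,c,\beta_0,e_0,N$.

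For uniform tightness I take $K_R := \overline{B_R(0)}\times \overline{B_R(0)} \subset Z\times Z$ and estimate $\mu_\beta((Z\times Z)\setminus K_R)$ by the same integrand. On the complement, $\|y\|^2+\|z\|^2 > R^2$, so I split the Gaussian factor $e^{-c(\|y\|^2+\|z\|^2)} = e^{-cR^2/2}\, e^{-c(\|y\|^2+\|z\|^2)/2}$, pulling out $e^{-cR^2/2}$. What remains is, up to replacing $c$ by $c/2$, exactly the integral controlled in the boundedness step, hence uniformly bounded in $\beta\ge 2\beta_0$. This shows $\mu_\beta((Z\times Z)\setminus K_R) \le C e^{-cR^2/2}$, which can be made smaller than any prescribed $\varepsilon$ by choosing $R$ large. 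The final statement on weak transversality then follows immediately from Prokhorov's theorem, as recorded in the remark following Definition~\ref{def:diag}.

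The one point that demands care is the accounting of powers of $\beta$: the normalisation $B_\beta^{-1} \sim \beta^N$ must be matched by a $\beta^{-N}$ factor coming only from the $N$-dimensional integration along $E$ (equivalently along $V$), while integration along the $N$ orthogonal directions in $V^\perp$ must not help, since these are pinned by the affine offset $e_0$. The splitting $\beta = \beta_0 + (\beta-\beta_0)$ is designed precisely so that after using the $\beta_0$-piece to invoke \eqref{9z2bLs}, the remaining concentrator $e^{-(\beta-\beta_0)\|y-z\|^2}$ still has full strength for the $V$-integration but is overpowered by the fixed Gaussian $e^{-c\|y\|^2}$ in the $V^\perp$-integration, which is what makes the cancellation work cleanly.
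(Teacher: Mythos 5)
Your general strategy — splitting $\beta = \beta_0 + (\beta-\beta_0)$ and invoking \eqref{9z2bLs} to trade a $\beta_0$-piece of the concentrator for the fixed Gaussian $e^{b-c(\|y\|^2+\|z\|^2)}$ — is exactly the paper's approach, with your variables $(y_V,y_\perp,v)$ differing only by a linear shift from the paper's $(\xi,\eta,\zeta)$: indeed $\xi=e_0+y_V$, $\eta=y_\perp-e_0$, $\zeta=y_V-v$. Your tightness argument (pull out $e^{-cR^2/2}$ on the complement of $\overline{B_R}\times\overline{B_R}$ and reuse the boundedness estimate with $c$ replaced by $c/2$) is also sound and a valid alternative to the paper's $A_R'$, $A_R''$ splitting.

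The power counting, which you yourself flag as ``the one point that demands care,'' is where the argument goes wrong, and wrong in a way that, taken at face value, makes the estimate fail. You assert that ``only the parallel direction in $E$ produces the decaying factor $\beta^{-N/2}$'' and that ``integration along the $N$ orthogonal directions in $V^\perp$ must not help, since these are pinned by the affine offset $e_0$.'' This is false: the perpendicular integral $\int_{V^\perp} e^{-c\|y_\perp\|^2-(\beta-\beta_0)\|y_\perp-e_0\|^2}\,dy_\perp$ concentrates near $y_\perp=e_0$ and, on completing the square, equals $\bigl(\pi/(c+\beta-\beta_0)\bigr)^{N/2}\,e^{-\frac{c(\beta-\beta_0)}{c+\beta-\beta_0}\|e_0\|^2}$, which scales like $(\beta-\beta_0)^{-N/2}$ — being pinned near a fixed point still shrinks the effective integration volume. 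With your stated accounting ($B_\beta^{-1}\sim\beta^N$, parallel $\sim\beta^{-N/2}$, perpendicular $\sim$ const.) the product would diverge like $\beta^{N/2}$ rather than stay bounded. The correct count is that the difference $y_V-v$ and the perpendicular $y_\perp$ each contribute $(\beta-\beta_0)^{-N/2}$, together $(\beta-\beta_0)^{-N}$, which is what cancels $B_\beta^{-1}\sim\beta^N$. This is precisely the paper's identity $B_\beta^{-1}\int_{E_0^\perp\times E_0}e^{-\frac{\beta}{2}(\|\eta\|^2+\|\zeta\|^2)}\,d\mathcal{L}^{2N}(\eta,\zeta)=B_\beta^{-1}B_{\beta/2}=2^N$: a $2N$-dimensional Gaussian integral over the concentrating pair $(\eta,\zeta)$, not an $N$-dimensional one.
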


\begin{proof}
We denote by $E_0$ the linear space parallel to $E$, by $E_0^\perp$ its orthogonal complement, and by $P_E$, $P_{E_0}$, $P_{E_0^\perp}$ the orthogonal projections with respect to the scalar product of $Z$. We change variables according to
\begin{equation}\label{eqchangvar}
 (y,z)=(\xi+\eta,\xi-\zeta) ,
 \hskip5mm
\text{ with $\xi\in E$, $\eta\in E_0^\perp$, $\zeta\in E_0$.}
\end{equation}
Specifically, $\xi=P_{E}y$, $\eta=y-\xi\in E_0^\perp$, $\zeta=\xi-z\in E_0$. Let $e_0:=P_E0\in E_0^\perp$ be the point of $E$ closest to the origin, so that $\xi-e_0\in E_0$. By orthogonality
\begin{equation}\begin{split}
 \|y\|^2&=\|\xi-e_0+\eta+e_0\|^2=
 \|\xi-e_0\|^2+\|\eta+e_0\|^2,\\
 \|z\|^2&=\|\xi-e_0-\zeta+e_0\|^2=
 \|\xi-e_0-\zeta\|^2+\|e_0\|^2,
 \end{split}
\end{equation}
which implies
\begin{equation}\label{eq:y2z2}
 \|y\|^2+\|z\|^2 \ge \frac14 \|\xi\|^2\,.
\end{equation}
Indeed, if $\|\xi\|\le 2\|e_0\|$ then
 $4\|z\|^2\ge4\|e_0\|^2\ge \|\xi\|^2$. Otherwise, $\|\xi\|\le \|\xi-e_0\|+\|e_0\|\le \|\xi-e_0\|+\frac12\|\xi\|$, and so
$4\|y\|^2\ge 4\|\xi-e_0\|^2\ge \|\xi\|^2$. This concludes the proof of \eqref{eq:y2z2}.
With $\|y-z\|^2=\|\eta+\zeta\|^2=\|\eta\|^2+\|\zeta\|^2$,
condition \eqref{9z2bLs} implies 
\begin{equation}\label{9z2bLsn}
    \beta_0 \left( \|\eta\|^2+\|\zeta\|^2\right)
    +
    \Phi(\xi+\eta)
    \geq
    \frac14 c
        \| \xi \|^2
    -
    b 
    \hskip3mm
    \text{ for all $\xi\in E,\eta\in E_0^\perp, \zeta \in E_0$}
    .
\end{equation}
For every $f\in C_b(Z\times Z;[0,\infty))$, from \eqref{eqdefmusubg} and \eqref{9z2bLsn} one obtains, for $\beta\ge 2\beta_0$,
\begin{equation}\label{eqmubsubgchv}
\begin{split}
    \int_{Z\times Z} f d\mu_\beta
    & = 
    B_\beta^{-1}
    \int_{E\times E_0^\perp\times E_0}
    f(\xi+\eta,\xi-\zeta)
    {\rm e}^{-\beta(\|\eta\|^2+\|\zeta\|^2)-\Phi(\xi+\eta) }d\calH^{3N}(\xi,\eta,\zeta)
    \\ & \le 
    B_\beta^{-1} 
    \int_{E\times E_0^\perp\times E_0}
    f(\xi+\eta,\xi-\zeta)
    {\rm e}^{-(\beta-\beta_0)(\|\eta\|^2+\|\zeta\|^2)+b-\frac c4\|\xi\|^2}
    d\calH^{3N}(\xi,\eta,\zeta)
    \\ & \le 
    B_\beta^{-1}
    \int_{E\times E_0^\perp\times E_0}
    f(\xi+\eta,\xi-\zeta)
    {\rm e}^{-\frac12 \beta(\|\eta\|^2+\|\zeta\|^2)+b-\frac c4\|\xi\|^2}
    d\calH^{3N}(\xi,\eta,\zeta).
\end{split}
\end{equation}
Inserting $f=1$ in \eqref{eqmubsubgchv}, we have, by Fubini's theorem,
\begin{equation}\label{o4KYs9}
\begin{split}
 \mu_\beta (Z\times Z)
     \le B_\beta^{-1}
 \int_{E_0^\perp\times E_0}
 {\rm e}^{-\frac12 \beta(\|\eta\|^2+\|\zeta\|^2)}
 d\mathcal L^{2N}(\eta,\zeta)
  \int_{E}
 {\rm e}^{b-\frac c4\|\xi\|^2}
 d\calH^{N}(\xi)
 \le C ,
\end{split}
\end{equation}
independently of $\beta$ (for every $\beta \ge 2\beta_0$),  and uniform boundedness follows.

For $R > 0$, let $A_R' := \{(y,z):\ \|\xi\|> R\}$. The same computation shows that
\begin{equation}\label{o4KYs9b}
\begin{split}
 \mu_\beta (A_R')
&     \le B_\beta^{-1}
 \int_{E_0^\perp\times E_0}
 {\rm e}^{-\frac12 \beta(\|\eta\|^2+\|\zeta\|^2)}
 d\mathcal L^{2N}(\eta,\zeta)
  \int_{E\cap \{\|\xi\|>R\}}
 {\rm e}^{b-\frac c4\|\xi\|^2}
 d\calH^{N}(\xi).
\end{split}
\end{equation}
The first integral equals $B_{\beta/2}=2^{-N}B_\beta$ and the second converges to zero as $R\to\infty$, independently of $\beta$. Therefore $\lim_{R\to\infty}  \sup_{\beta\ge2\beta_0}  \mu_\beta (A_R')=0$. Analogously, with $A_R'' := \{(y,z):\ \|\eta+\zeta\|> R\}$, changing variables to $\hat y:=\beta^{1/2}(\eta+\zeta)$,
\begin{equation}\label{o4KYs9c}
\begin{split}
    \mu_\beta (A_R'')
    & \le 
    B_\beta^{-1}
    \int_{(E_0^\perp\times E_0)\cap \{\|\eta+\zeta\|> R\} }
    {\rm e}^{-\frac12 \beta(\|\eta\|^2+\|\zeta\|^2)}
    d\mathcal L^{2N}(\eta,\zeta) 
    \int_{E}
    {\rm e}^{b-c\|\xi\|^2}
    d\calH^{N}(\xi)
    \\ & \le 
    C
    \int_{Z\cap \{\|\hat y\|> \beta^{1/2} R\} }
    {\rm e}^{-\frac12 \|\hat y\|^2}
    d\mathcal L^{2N}(\hat y)
    \le C
    \int_{Z\cap \{\|\hat y\|> \beta_0^{1/2} R\} }
    {\rm e}^{-\frac12 \|\hat y\|^2}
    d\mathcal L^{2N}(\hat y),
\end{split}
\end{equation}
which converges to zero as $R\to\infty$, implying $\lim_{R\to\infty}  \sup_{\beta\ge2\beta_0} \mu_\beta (A_R'')=0$. As
\begin{equation}
 K_R:=(Z\times Z)\setminus (A_R'\cup A_R'')
 = \{(y,z):\ \|\xi\|\leq R,\ \|\eta+\zeta\|\leq R\}
\end{equation}
is compact, this concludes the proof of uniform tightness. Weak transversality follows by Prokhorov's theorem.
\end{proof}
Strong transversality requires regularity of $\Phi$ in the direction orthogonal to $E$. To simplify notation, we introduce the function$\Psi:E\times E_0^\perp\to\R$,
\begin{equation}
 \Psi(\xi,\eta):=\Phi(\xi+\eta) 
\end{equation}
and denote by $D_2\Psi(\xi,\eta)$ the partial derivative of $\Psi(\xi,\eta)$ with respect to $\eta$.

\begin{prop}[Diagonal concentration of sub-Gaussian measures]\label{VkH97k}
Suppose that the assumptions of Prop.~\ref{p5qmD1} hold. In addition, assume:
\begin{itemize}
\item[i)] $\Psi(\xi,\eta)$ is {continuously} differentiable with respect to $\eta$.
\item[ii)] There {are constants} $0<C<\infty$, $\gamma \ge 0$ and a function $u:E\to[0,\infty)$ satisfying
$\int_E u(\xi)e^{-c\|\xi\|^2/4}\, d\mathcal{H}^{N}(\xi) <\infty$,
where $c$ is as in \eqref{9z2bLs},
such that
\begin{equation}\label{ceBj5Z}
    \| D_2\Psi(\xi,\eta) \|\leq {C \left(1+u(\xi)+\| \eta \|^\gamma\right)}, \quad \forall \xi \in E, \eta\in E_0^\perp .
\end{equation}
\end{itemize}
Then, $ \mu$ is strongly transversal and its diagonal concentration $\mu_\infty$ is given by
\begin{equation}\label{8ddOkTdm}
    \int_{Z\times Z} f(y,z) \, d\mu_\infty(y,z)
    = 
    \int_E
        f(\xi,\xi)
        \, {\rm e}^{-\Phi(\xi)}
    \, d\mathcal{H}^{N}(\xi) ,
\end{equation}
for every $f \in C_b(Z\times Z)$. Further, there is $C_1>0$ such that, for $\beta$ sufficiently large,
\begin{equation}\label{Kgx9hb}
    \| \mu_\beta - \mu_\infty \|_{\rm FN}
{\le}
    C_1 \beta^{-\frac12}\,.
\end{equation}
\end{prop}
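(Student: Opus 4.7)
The plan is to estimate $\|\mu_\beta - \mu_\infty\|_{\rm FN}$ directly against a candidate limit, identify the limit as the right-hand side of \eqref{8ddOkTdm} in the process, and read off both strong transversality and the rate $\beta^{-1/2}$. I start from the change of variables $(y,z)=(\xi+\eta,\xi-\zeta)$ used in the proof of Prop.~\ref{p5qmD1}, which turns $\mu_\beta$ into a product structure on $E\times E_0^\perp\times E_0$ with density $B_\beta^{-1}e^{-\beta(\|\eta\|^2+\|\zeta\|^2)-\Phi(\xi+\eta)}$. I then rescale $\hat\eta:=\beta^{1/2}\eta$, $\hat\zeta:=\beta^{1/2}\zeta$; since $B_\beta=\beta^{-N}B_1$, the factor from the $2N$-dimensional Jacobian exactly cancels $B_\beta^{-1}$, so that the density becomes $B_1^{-1}e^{-\|\hat\eta\|^2-\|\hat\zeta\|^2}$ times $e^{-\Phi(\xi+\beta^{-1/2}\hat\eta)}$, and $f(y,z)$ becomes $f(\xi+\beta^{-1/2}\hat\eta,\xi-\beta^{-1/2}\hat\zeta)$. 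Formally sending $\beta\to\infty$ inside the integrand produces exactly the right-hand side of \eqref{8ddOkTdm}, so that expression is the only possible limit and our task reduces to quantifying the error made by this formal substitution.

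Fix $f$ with $\|f\|_{\rm BL}\le 1$, so that $\mathrm{Lip}(f)\le \ell^{-1}$ and $\|f\|_\infty\le 1$. Using the identity $AB-A'B'=(A-A')B+A'(B-B')$, I split the integrand difference into an \emph{$f$-difference} term and a \emph{$\Phi$-difference} term. For the $f$-term, the Lipschitz bound gives $|f(\xi+\beta^{-1/2}\hat\eta,\xi-\beta^{-1/2}\hat\zeta)-f(\xi,\xi)|\le \ell^{-1}\beta^{-1/2}(\|\hat\eta\|+\|\hat\zeta\|)$, multiplied by the factor $e^{-\Phi(\xi+\beta^{-1/2}\hat\eta)}$. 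For the $\Phi$-term, assumption (i) and the fundamental theorem of calculus give
\begin{equation*}
e^{-\Phi(\xi+\beta^{-1/2}\hat\eta)}-e^{-\Phi(\xi)}
=-\beta^{-1/2}\int_0^1 e^{-\Psi(\xi,t\beta^{-1/2}\hat\eta)}\,D_2\Psi(\xi,t\beta^{-1/2}\hat\eta)\cdot\hat\eta\,dt,
\end{equation*}
multiplied by $|f(\xi,\xi)|\le 1$. Both pieces therefore carry an explicit factor $\beta^{-1/2}$, and what remains is to bound the integrals of the residues uniformly in $\beta$.

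To do so I absorb the $\Phi$-exponentials using the transversality bound \eqref{9z2bLsn}, specialized to $\zeta=0$ and the rescaled variable, which reads $\Phi(\xi+\beta^{-1/2}\hat\eta)\ge \tfrac{c}{4}\|\xi\|^2-b-\beta_0\beta^{-1}\|\hat\eta\|^2$; for $\beta\ge 2\beta_0$ this yields $e^{-\Phi(\xi+s\beta^{-1/2}\hat\eta)}\le e^{b-\tfrac{c}{4}\|\xi\|^2+\tfrac{1}{2}\|\hat\eta\|^2}$ uniformly in $s\in[0,1]$. Combined with the Gaussian weight $e^{-\|\hat\eta\|^2-\|\hat\zeta\|^2}$ this leaves the decay $e^{-\tfrac{c}{4}\|\xi\|^2-\tfrac12\|\hat\eta\|^2-\|\hat\zeta\|^2}$. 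For the $f$-term the remaining polynomial factor $\|\hat\eta\|+\|\hat\zeta\|$ is harmless and the $\xi$-integral converges by itself. For the $\Phi$-term the extra factor $\|D_2\Psi\|\,\|\hat\eta\|\le C(1+u(\xi)+\|\hat\eta\|^\gamma)\|\hat\eta\|$ from \eqref{ceBj5Z} splits into a Gaussian-integrable polynomial in $\hat\eta$ and a factor $(1+u(\xi))$ whose integral against $e^{-c\|\xi\|^2/4}\,d\mathcal H^N(\xi)$ is finite by assumption (ii). Summing the two bounds and taking the supremum over $\|f\|_{\rm BL}\le 1$ gives \eqref{Kgx9hb}, and since convergence in the flat norm to a bounded measure is exactly narrow convergence on uniformly tight uniformly bounded sets (already established in Prop.~\ref{p5qmD1}), strong transversality and the representation \eqref{8ddOkTdm} follow simultaneously.

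The main obstacle is the $\Phi$-difference term: the derivative bound (ii) is allowed to grow polynomially both in $\xi$ (through $u$) and in $\eta$ (through $\|\eta\|^\gamma$), so it is crucial that the transversality condition \eqref{9z2bLs} delivers simultaneously a Gaussian decay in $\xi$ that beats $u(\xi)$ (this is exactly what the integrability hypothesis on $u$ in (ii) ensures) and a Gaussian decay in $\eta$ that, after shrinking $\beta/2$ out of the thermalization weight $e^{-\beta\|\eta\|^2}$, dominates the polynomial $\|\hat\eta\|^{1+\gamma}$ uniformly in $\beta\ge 2\beta_0$. Once this balancing is set up, the remaining computation is routine.
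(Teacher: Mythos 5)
Your proof is correct and follows essentially the same route as the paper: after the change of variables $(y,z)=(\xi+\eta,\xi-\zeta)$, you split $f(\cdot)e^{-\Phi(\cdot)} - f(\xi,\xi)e^{-\Phi(\xi)}$ into an $f$-difference term (controlled by the Lipschitz constant of $f$) and a $\Phi$-difference term (controlled via the fundamental theorem of calculus and the growth bound on $D_2\Psi$), absorb the exponentials with the transversality inequality \eqref{9z2bLsn}, and extract $\beta^{-1/2}$ from the Gaussian scaling. The only cosmetic difference is that you rescale $\hat\eta=\beta^{1/2}\eta$, $\hat\zeta=\beta^{1/2}\zeta$ at the outset, while the paper rescales only after the pointwise bounds are in place; the substance of the estimates is identical.
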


\begin{proof}
It suffices to prove \eqref{Kgx9hb} for $\mu_\infty$ defined by \eqref{8ddOkTdm}. We fix
$f \in BL(Z\times Z)$ with $\|f\|_\infty \leq 1$, ${\rm Lip}(f) \leq {1/\ell}$ and
 rewrite the definition of $\mu_\infty$ as
\begin{equation}\label{8ddOkT}
    \int_{Z\times Z} f(y,z) \, d\mu_\infty(y,z)
    = 
    \int_{E\times E_0^\perp\times E_0}
 B_\beta^{-1}
 {\rm e}^{-\beta(\|\eta\|^2+\|\zeta\|^2) }
    f(\xi,\xi)
        \, {\rm e}^{-\Phi(\xi)}
    \, d\calH^{3N}(\xi,\eta,\zeta).
\end{equation}
We then
compute, as in the first step of
\eqref{eqmubsubgchv},
\begin{equation}
\begin{split}
    A_f^\beta
    &:=    \int_{Z\times Z} f(y,z) \, d(\mu_\beta-\mu_\infty)(y,z)
     \\ &=
    B_\beta^{-1}
    \int_{E\times E_0^\perp\times E_0}
    \left[ g(\xi,\eta,\zeta){\rm e}^{
    -\Psi(\xi,\eta) }
    -g(\xi,0,0)
    {\rm e}^{
    -\Psi(\xi,0) }
    \right]
    {\rm e}^{-\beta(\|\eta\|^2+\|\zeta\|^2)}
    d\calH^{3N}(\xi,\eta,\zeta) ,
\end{split}
\end{equation}
where we write, for $(\xi,\eta,\zeta)\in E\times E_0^\perp\times E_0$,
    $g(\xi,\eta,\zeta) := f(\xi+\eta, \xi-\zeta)$.
As $f$ is $1/\ell$-Lipschitz, so is $g(\xi,\cdot,\cdot)$, and 
\begin{equation}
 |g(\xi,\eta,\zeta)-g(\xi,0,0)|\le 
 \frac{\|\eta\|+\|\zeta\|}{\ell}.
\end{equation}
As $\Psi$ is differentiable in the second argument,
using ii)
\begin{equation}\begin{split}
\left|{\rm e}^{
 -\Psi(\xi,\eta) }
 -{\rm e}^{
 -\Psi(\xi,0) }\right|
 \le &\|\eta\| \int_0^1 
 {\rm e}^{
 -\Psi(\xi,t\eta) } \|D_2\Psi(\xi, t\eta)\| dt \\
 \le &C \|\eta\|(1 + u(\xi) +\|\eta\|^\gamma) \max_{t\in[0,1]}
 {\rm e}^{
 -\Psi(\xi,t\eta) }  .
 \end{split}
\end{equation}
From \eqref{9z2bLsn}, we further obtain
\begin{equation}
  -\Psi(\xi,t\eta) 
 \le b+\beta_0\|\eta\|^2+\beta_0\|\zeta\|^2-\frac  c4\|\xi\|^2 ,
 \quad\text{ for all } t\in[0,1] ,
\end{equation}
and with a triangular inequality
and $\|g\|_\infty\le 1$, we have
\begin{equation}\begin{split}
 &\left|g(\xi,\eta,\zeta){\rm e}^{
 -\Psi(\xi,\eta) }
 -g(\xi,0,0)
 {\rm e}^{
 -\Psi(\xi,0) }
 \right|\\
 & \le
 \left|g(\xi,\eta,\zeta)
 -g(\xi,0,0)
 \right|{\rm e}^{
 -\Psi(\xi,\eta) }
 + 
 \left|g(\xi,0,0)\right|\,\left|{\rm e}^{
 -\Psi(\xi,\eta) }
 -
 {\rm e}^{
 -\Psi(\xi,0) }
 \right|
\\
&\le
 \left(\frac{\|\eta\|+\|\zeta\|}{\ell} 
 +C\|\eta\|+C\|\eta\|^{\gamma+1}+C\|\eta\|u(\xi)\right)
 {\rm e}^{b+\beta_0\|\eta\|^2+\beta_0\|\zeta\|^2-\frac c4\|\xi\|^2 }.
 \end{split}\end{equation}
Therefore, for $\beta\ge 2\beta_0$,
\begin{equation}
 \begin{split}
  |A_f^\beta|\le 
 B_\beta^{-1}
 \int_{E\times E_0^\perp\times E_0}
&\left(\frac{\|\eta\|+\|\zeta\|}{\ell} 
 +C\|\eta\|+C\|\eta\|^{\gamma+1}+C\|\eta\|u(\xi)\right) \times \\
 &\qquad
 {\rm e}^{b-\frac c4\|\xi\|^2 }
 {\rm e}^{-\frac12\beta(\|\eta\|^2+\|\zeta\|^2)}
 d\calH^{3N}(\xi,\eta,\zeta) .
 \end{split}
\end{equation}
By the assumption on $u(\cdot)$,
the integral in $\xi$ gives a fixed constant. In the others we change variables, according to $\hat\eta:=\beta^{1/2}\eta$, $\hat\zeta:=\beta^{1/2}\zeta$, and  we obtain
\begin{equation}
 \begin{split}
  |A_f^\beta|\le 
C \beta^{-1/2}\,,
 \end{split}
\end{equation}
which concludes the proof.
\end{proof}

\begin{remark}\label{rmk:rate}
The rate $\beta^{-1/2}$ is a result of Taylor expanding $e^{-\Psi}$ in the second argument, and can therefore only be improved using our approach if $\mu_D$ is such that the corresponding $\Psi$ satisfies $D_2\Psi(\xi,0)=0$ for all $\xi\in E$.
\end{remark}

\section{Approximation}\label{kA7NJU}

We proceed to exploit the thermalization framework set forth in the foregoing for purposes of approximation by discrete measures.

\subsection{General strategy}
We first observe that transversality and approximation can be decoupled. Indeed, assume that $\mu \in \mathcal{M}^+(Z\times Z)$ is transversal, and let $\beta_h\to+\infty$, $\mu_\infty \in \mathcal{M}_b^+(Z\times Z)$, be such that $\mu_{\beta_h} \rightharpoonup \mu_\infty$. Suppose, in addition, that for some approximating sequence $(\mu_h)\in \mathcal{M}^+(Z\times Z)$ we have
\begin{equation}\label{IS5iwO}
    \mu_{h,\beta_h}-\mu_{\beta_h} \rightharpoonup 0 .
\end{equation}
Writing $\mu_{h,\beta_h} - \mu_\infty = (\mu_{h,\beta_h} - \mu_{\beta_h}) + (\mu_{\beta_h} - \mu_\infty)$,
necessarily $\mu_{h,\beta_h} \rightharpoonup \mu_\infty$. Therefore, in the following we focus on \eqref{IS5iwO}.

We remark that $\mu$ is not known in practice and \eqref{IS5iwO} cannot be verified directly. However, the sequence $\mu_{h,\beta_h}$ is known, and---by extrapolating values computed for a few values of $h$---it is possible to verify if it is uniformly bounded, tight, Cauchy in the flat norm and, consequently, converges to a limit. It bears emphasis that this scheme is entirely built upon the data $(\mu_h)$ and at no time knowledge of the underlying measure $\mu$, which is unknown in general, is required. It should also be noted that the convergence of $\mu_{h,\beta_h}$ to the thermalization of $\mu$ depends critically on the choice of the annealing sequence $\beta_h$. It is, therefore, important to determine conditions on the annealing sequence ensuring convergence under appropriate growth and regularity assumptions.

\subsection{Approximation by discrete material measures}\label{h9Mes5}

We proceed to further assess the feasibility of the abstract frameworks set forth above by considering the specific case of approximation by discrete material measures. Measures supported on finite point sets are of practical significance, since they represent material data sets resulting from discrete experimental measurements. They also provide a natural means of approximating general measures in the sense of weak convergence.
Specifically, we consider a sequence of 
material likelihood measures 
$\mu_{D,h}$ of the form given in \eqref{muDh}.
We begin by showing that, if $(\mu_{\beta_h})$ is uniformly bounded and tight and $(\mu_{D,h})$ suitably approximates $\mu_D$, then $(\mu_{h,\beta_h})$ is also uniformly bounded and tight.

\begin{prop}[Boundedness and tightness]\label{D7NtRs}
Let $\mu = \mu_D \times \mu_E \in \mathcal{M}^+(Z\times Z)$ with $\mu_E$ as in (\ref{eqmuedetermintr}). Assume
\begin{enumerate}
\item\label{itbdtightub2}
$(\mu_\beta)$, for $\beta\ge\beta_0>0$, is uniformly bounded and uniformly tight.
\end{enumerate}
Let $\mu_h = \mu_{D,h} \times \mu_E \in \mathcal{M}^+(Z\times Z)$, with $\mu_{D,h}$ as in (\ref{muDh}). Let $\beta_h\to+\infty$ {and $\epsilon_h\to0$}. 
Assume further that, for every $h$, there is a partition $\mathcal{A}_h = \{A_\xip \, : \, \xip \in P_h \}$ of $Z$, with $\xip \in A_\xip$ and $\mu_D(A_\xip)<\infty$ for every $\xip\in P_h$ and every $h$, such that:
\begin{enumerate}\stepcounter{enumi}
\item\label{itbdtightdelta}
There is $\bar c>0$  such that, for all $h$ and all $\xip \in P_h$,
\begin{equation}
    m_\xip\le \bar c\mu_D(A_\xip) .
\end{equation}
\item\label{itbdtighte2b}
The sequence $(\beta_h \epsilon_h^2)$ is bounded.
\item\label{itbdtightcst}
There is $c_*\ge1$ such that for all $h \in \mathbb{N}$, $\xip \in P_h$,  $z\in E$, all $y\in A_p$,
\begin{equation}\label{qc3qLRcst22}
    \|\xip-z\|^2\le c_*^2\left[\epsilon_h^2+\|y-z\|^2\right].
\end{equation}
\item\label{itbdtightcst2}
For all $h \in \mathbb{N}$, $\xip \in P_h$, 
\begin{equation}\label{ass:vii}
    \int_{A_p} \| y - \xip \|^2 \, d\mu_D(y)
    \leq
    \epsilon_h^2 \, \mu_D(A_p) .
\end{equation}
\end{enumerate}
Then there is $h_0>0$ such that, after restricting to $h\ge h_0$,
\begin{itemize}
\item[a)] The sequence 
$(\mu_{h,\beta_h})$ is uniformly bounded in $\mathcal{M}_b(Z\times Z)$.
\item[b)] The sequence $(\mu_{h,\beta_h})$ is uniformly tight in $\mathcal{M}_b(Z\times Z)$.
\end{itemize}
\end{prop}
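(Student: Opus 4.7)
The plan is to dominate $\mu_{h,\beta_h}$ by a uniform multiple of $\mu_{\beta_h/2}$, which by (i) is uniformly bounded and uniformly tight for $h$ large enough that $\beta_h/2\ge\beta_0$. The strategy has two moves: convert the sum over $p\in P_h$ into an integral over $Z$ against $\mu_D$ using (ii), and replace the discrete weight $e^{-\beta_h\|p-z\|^2}$ by a continuous one $e^{-\beta_h\|y-z\|^2/2}$ via a triangle inequality on a suitable ``good'' subset of $A_p$.

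For the pointwise step, fix $p\in P_h$ and set $A_p^{\mathrm{g}}:=\{y\in A_p:\|y-p\|^2\le 2\epsilon_h^2\}$. Chebyshev's inequality combined with (v) yields
\begin{equation*}
\mu_D(A_p\setminus A_p^{\mathrm{g}})\le\frac{1}{2\epsilon_h^2}\int_{A_p}\|y-p\|^2\,d\mu_D\le\tfrac12\mu_D(A_p),
\end{equation*}
so $\mu_D(A_p^{\mathrm{g}})\ge\tfrac12\mu_D(A_p)$. For $y\in A_p^{\mathrm{g}}$ and $z\in E$, combining $\|p-z\|\ge\|y-z\|-\|y-p\|$ with the elementary bound $(a-b)^2\ge\tfrac12 a^2-b^2$ gives $\|p-z\|^2\ge\tfrac12\|y-z\|^2-2\epsilon_h^2$, hence $e^{-\beta_h\|p-z\|^2}\le e^{2\beta_h\epsilon_h^2}e^{-\beta_h\|y-z\|^2/2}$, where by (iii) the prefactor $e^{2\beta_h\epsilon_h^2}$ is bounded by some $C_0$ uniformly in $h$. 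Using (ii) in the form $m_p\le\bar c\mu_D(A_p)\le 2\bar c\int_{A_p^{\mathrm{g}}}d\mu_D$ produces
\begin{equation*}
m_p e^{-\beta_h\|p-z\|^2}\le 2\bar c C_0\int_{A_p^{\mathrm{g}}}e^{-\beta_h\|y-z\|^2/2}\,d\mu_D(y).
\end{equation*}

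Summing over $p$ (the sets $A_p^{\mathrm{g}}$ are disjoint subsets of $Z$), integrating over $z\in E$ against $\mathcal{H}^N$, and using $B_{\beta_h/2}=2^NB_{\beta_h}$, one gets
\begin{equation*}
\mu_{h,\beta_h}(Z\times Z)\le 2^{N+1}\bar c C_0\,\mu_{\beta_h/2}(Z\times Z),
\end{equation*}
which by (i) is uniformly bounded once $\beta_h/2\ge\beta_0$, proving (a). For (b) the same chain is applied with the integrals restricted to the two ``large'' sets. The key geometric observation is that on $A_p^{\mathrm{g}}$ with $\|p\|>R$ one has $\|y\|\ge\|p\|-\sqrt2\epsilon_h\ge R/2$ as soon as $R\ge 2\sqrt2\sup_{h\ge h_0}\epsilon_h$, so that $\bigcup_{\|p\|>R}A_p^{\mathrm{g}}\subset\{\|y\|>R/2\}$ and
\begin{equation*}
\mu_{h,\beta_h}(\{\|p\|>R\}\times E)\le 2^{N+1}\bar c C_0\,\mu_{\beta_h/2}(\{\|y\|>R/2\}\times E),
\end{equation*}
and similarly $\mu_{h,\beta_h}(Z\times\{\|z\|>R\})\le 2^{N+1}\bar c C_0\,\mu_{\beta_h/2}(Z\times\{\|z\|>R\})$. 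Both sides vanish as $R\to\infty$ uniformly in $h$ by the uniform tightness part of (i), yielding uniform tightness of $(\mu_{h,\beta_h})$.

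The main subtlety I expect is that condition (iv), which bounds $\|p-z\|^2$ from \emph{above} by a constant multiple of $\epsilon_h^2+\|y-z\|^2$, only gives a \emph{lower} bound on $e^{-\beta_h\|p-z\|^2}$ and hence points in the wrong direction for controlling $\mu_{h,\beta_h}$ from above. The needed upper bound must instead be drawn from (v) via Chebyshev, which converts the $L^2$-type average estimate $\int_{A_p}\|y-p\|^2\,d\mu_D\le\epsilon_h^2\mu_D(A_p)$ into a pointwise bound $\|y-p\|\le\sqrt2\epsilon_h$ on a subset carrying at least half the $\mu_D$-mass of $A_p$. The factor of $1/2$ lost in the exponent when invoking $(a-b)^2\ge\tfrac12 a^2-b^2$ is then absorbed by passing from $\beta_h$ to $\beta_h/2$, for which (i) continues to apply once $\beta_h\ge 2\beta_0$.
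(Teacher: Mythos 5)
Your proof is correct, and it follows a genuinely different route from the paper's. The key divergence is in how the discrete weight $m_p e^{-\beta_h\|p-z\|^2}$ is dominated by an integral against $\mu_D$. The paper first integrates out $z\in E$ exactly via the orthogonal projection $P_E$, reducing the comparison to one between $e^{-\beta\|p-P_E p\|^2}$ and the $\mu_D$-average of $e^{-\frac{\beta}{2}\|y-P_E y\|^2}$ over $A_p$, which it handles with Jensen's inequality applied to $-\log$. You instead use Chebyshev with assumption (v) to extract a ``good'' subset $A_p^{\mathrm g}\subset A_p$ carrying at least half the $\mu_D$-mass on which $\|y-p\|\le\sqrt2\,\epsilon_h$ \emph{pointwise}, and then compare the Gaussian factors via the triangle inequality directly in the $(y,z)$ variables, without ever decomposing along $E$. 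The net effect is the same comparison $\mu_{h,\beta_h}\lesssim\mu_{\beta_h/2}$, but your estimate is local in both $p$ and $z$, whereas the paper's is only local in $p$ after having integrated over all of $E$. This pays off substantially in part (b): because your domination holds for each $(p,z)$ individually, you can restrict either the first variable (via $\bigcup_{\|p\|>R}A_p^{\mathrm g}\subset\{\|y\|>R/2\}$) or the second (simply intersecting $E$ with $\{\|z\|>R\}$), and uniform tightness of $(\mu_\beta)$ immediately transfers. The paper, having lost the $z$-localization, must introduce the sets $K'$, $L'$, a separate Jensen argument with the parameter $a$, and assumption (iv) to push the estimate to a subset of $E$; your proof dispenses with assumption (iv) entirely, which is a genuine simplification (the paper still needs (iv) later, in Cor.~\ref{y1TP50}, so the hypothesis is not redundant for the overall theory). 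One small trade-off: the Chebyshev step costs a factor $2$ in the exponent absorbed into $\beta_h/2$, whereas the Jensen argument averages over all of $A_p$ rather than half of it; both lose the same $\beta\mapsto\beta/2$ and produce a harmless constant, so neither is sharper in the end.
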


\begin{proof}
a) For $\mu_E$ as in (\ref{eqmuedetermintr}) and $\mu_{D,h}$ as in (\ref{muDh}), for any $\beta>0$ we have
\begin{align}\label{eqmuhbetah}
\notag
    \| \mu_{h,\beta} \|_{\rm TV}
    &=
\sum_{\xip\in P_h}
     \mu_{h,\beta}(A_p\times E)
     =\sum_{\xip\in P_h}
   \mu_{h,\beta}(\{p\}\times E)\\
   &=\sum_{\xip\in P_h}
    \int_E
        m_\xip
        B_{\beta}^{-1}
        {\rm e}^{-\beta \|\xip - z\|^2}
    \, d\mathcal{H}^N(z)\,.
\end{align}
We denote by $P_E$ the closest-point projection from $Z$ onto $E$.  
By orthogonality, $\|\xip - z\|^2 = \|\xip - P_E(\xip)\|^2 + \|P_E(\xip) - z\|^2$, whence
\begin{equation}\label{LI3a6H}
\mu_{h,\beta}(A_p\times E)=
        m_\xip B_{\beta}^{-1} C_{\beta} 
        {\rm e}^{-\beta \|\xip - P_E(\xip)\|^2} ,
\quad    C_{\beta} := \int_{E_0} {\rm e}^{-\beta \|z\|^2} d\calH^N(z).
\end{equation}
We claim that for all $\xip \in P_h$,
\begin{equation}\label{eqsecmoment}
\frac{1}{2\mu_D(A_\xip)}
    \int_{A_\xip}
            \|y - P_E(y)\|^2
        \, d\mu_D(y)
\le 
        \|\xip - P_E(\xip)\|^2 
    +
    \epsilon_h^2 .
\end{equation}
To see this, we use that 
$\Id-P_E=P_{E^\perp}$ is a projection
and \ref{itbdtightcst2} to get
\begin{equation}
\begin{split}
    \int_{A_\xip}\|P_{E^\perp}(y-p)\|^2 \, d\mu_D(y)
    \le
    \int_{A_\xip}\|y-\xip\|^2  \, d\mu_D(y) \le\epsilon_h^2   \mu_D(A_\xip)\,,
\end{split}
\end{equation}
and 
with 
$\frac12\|y - P_E(y)\|^2=
\frac12\|P_{E^\perp}(y)\|^2\le
    \|P_{E^\perp}(y-\xip)\|^2 +
    \|P_{E^\perp}(\xip)\|^2$, equation
\eqref{eqsecmoment} follows.

An application of Jensen's inequality to the convex function $-\mathop{\log}t$ and \eqref{eqsecmoment}
 give, for any fixed $\xip\in P_h$,
\begin{equation}\label{3VyqRFa}
\begin{split}
    & -
    \log
    \Big(
        \frac{1}{\mu_D(A_\xip)}
        \int_{A_\xip}
            {\rm e}^{-\frac{\beta}{2} \|y - P_E(y)\|^2}
        \, d\mu_D(y)
    \Big)
    \\ & \leq
    \frac{\beta }{2\mu_D(A_\xip)}
    \int_{A_\xip}
    \|y - P_E(y)\|^2
    \, d\mu_D(y)
    \leq
    \beta
    \Big(
        \|\xip - P_E(\xip)\|^2 + \epsilon_h^2
    \Big) ,
\end{split}
\end{equation}
which, rearranging terms, reads
\begin{equation}\label{3VyqRF}
\begin{split}
    {\rm e}^{-\beta
        \|\xip - P_E(\xip)\|^2     }
        \le
    {\rm e}^{\beta \epsilon_h^2}
        \frac1{\mu_D(A_\xip) }
        \int_{A_\xip}
            {\rm e}^{-\frac{\beta}{2} \|y - P_E(y)\|^2}
        \, d\mu_D(y).
\end{split}
\end{equation}
Using this bound, \ref{itbdtightdelta}, the definition of $C_\beta$, and the fact that  $B_{\beta/2}=2^{N}B_{\beta}$, (\ref{LI3a6H}) becomes
\begin{align}\label{eqmuhbape}
  \mu_{h,\beta}(A_p\times E)
    &\leq
    {\rm e}^{\beta \epsilon_h^2}
    \frac{m_\xip      B_{\beta}^{-1} C_{\beta}}{\mu_D(A_\xip)  }
    \int_{A_\xip}
        {\rm e}^{-\frac{\beta}{2} \|y - P_E(y)\|^2}
    \, d\mu_D(y) \notag\\
    &\leq \bar c 
    {\rm e}^{\beta \epsilon_h^2}
    \int_{A_p}
        B_{\beta}^{-1} \int_E
        {\rm e}^{-\frac{\beta}{2}\|y - P_E(y)\|^2-\beta \|P_E(y)-z\|^2}
    \, d\calH^N(z) d\mu_D(y)\notag\\
     &\leq  \bar c 
    {\rm e}^{\beta \epsilon_h^2}2^{N}
    \int_{A_p}
        B_{\beta/2}^{-1} \int_E
        {\rm e}^{-\frac{\beta}{2} \|y - z\|^2}
    \, d\calH^N(z) d\mu_D(y)
    \notag \\ & = 
     \bar c 
     {\rm e}^{\beta \epsilon_h^2}2^{N} \mu_{\beta/2}(A_p\times E)\,.
\end{align}
Summing over all $p\in P_h$ finally gives
\begin{equation}\label{eqbdmuhbetahdf}
    \| \mu_{h,\beta} \|_{\rm TV}
    \leq
    \bar c
    {\rm e}^{\beta \epsilon_h^2} 2^{N}\,
    \| \mu_{\beta/2} \|_{\rm TV}.
\end{equation}
Using this for $\beta=\beta_h\ge 2\beta_0$
proves the claim since, by \ref{itbdtightub2}, $\| \mu_{\beta} \|_{\rm TV}$ for $\beta\ge\beta_0$ is uniformly bounded and by \ref{itbdtighte2b}
 $(\beta_h \epsilon_h^2)$ is also uniformly bounded.

b) By the uniform tightness of $(\mu_\beta)$, for every $\delta > 0$ there are compact sets $K\subset Z$ and $L\subset E$ such that
\begin{equation}\label{eqmuzzkld}
    \mu_{\beta}(Z\times Z \setminus K \times L ) \leq \delta ,
\end{equation}
for all $\beta\ge\beta_0$. The strategy of the proof is as follows: we show that for all $\epsilon>0$, there exists a choice of small enough $\delta>0$ and corresponding sets $K'\subset Z$ and $L'\subset E$ depending on $\delta$ such that
for sufficiently large $h$  one has
\begin{equation}\label{approxtight}
\begin{split}
    \mu_{h,\beta_h}(Z\times Z \setminus K' \times L')
    \leq \epsilon .
    \end{split}
\end{equation}
More precisely, for 
 $c_*$ as in assumption \ref{itbdtightcst} and
some $C_*>0$ chosen below, we define
\begin{equation}
\begin{split}
  &K':=\big\{y\in Z: \|y\| \le  C_*+ c_*\max \{\|z\|: z\in K\}\big\}\,,\\  
  &L':=\{z\in E: \dist(z,L\cup K')\le C_*\}\,.
\end{split}
\end{equation}
Further, defining $P_h^K:=\{\xip\in P_h: A_p\cap K\ne\emptyset\}$, let $K_h :=\cup_{\xip\in P_h^K}A_\xip$ be the span of the $\mathcal{A}_h$-cell cover of $K$. It follows that $K\subseteq K_h$.
We pick a $z_0\in E$ and assume that $h_0$ is chosen so that $\epsilon_h\le 1$ for all $h\ge h_0$.
Then for any $y\in A_\xip$ by \ref{itbdtightcst}
\begin{equation}\label{eqxipz0}
\begin{split}
 \|\xip\|&\le \|z_0\|+\|\xip-z_0\|
 \le\|z_0\|+c_*\sqrt{\|y-z_0\|^2+1} \\
 &\le \|z_0\|+c_*\|y-z_0\|+c_*
 \le (1+c_*)\|z_0\|+c_*+c_*\|y\|=C_*+c_*\|y\|,\end{split}
\end{equation}
where we chose $C_*:=c_*+(1+c_*)\|z_0\|$.
Let $p\in P^K_h$. Then there is $y\in A_p\cap K$, and by \eqref{eqxipz0} $\|p\|\le C_*+c_*\|y\|$. This implies $p\in K'$, and therefore
 $P^K_h\subseteq K'$.

 The proof for bound \eqref{approxtight} consists of two parts. First we localize in the first argument, and show that there exists a constant $C>0$ only depending on $N$ and $\bar c$ such that for all $h$ sufficiently large,
\begin{equation}\label{bStep1}
\begin{split}
      \mu_{h,\beta_h}((Z\setminus  P^K_h)\times E )
    &\leq C    \delta \,.
\end{split}
\end{equation}
Then we localize in the second argument, and show that there exists a constant $C>0$ only depending on $N$, $\bar c$ and $C_*$ such that for $h$ sufficiently large,
\begin{equation}\label{bStep2}
\begin{split}
    &
    \mu_{h,\beta_h} (P^K_h\times (Z \setminus L'))
    \le C \delta ,
\end{split}
\end{equation}
The two estimates are then combined in \eqref{eqdecsetdiff}-\eqref{eqmudecsetdiff} below.  

To show \eqref{bStep1}, we proceed as in \eqref{eqmuhbetah} with the sum restricted to $p\in P_h\setminus P_h^K$ and, together with the same estimate as in \eqref{eqmuhbape}, we obtain
\begin{align}
    \mu_{h,\beta_h}((Z\setminus P^K_h)\times E )
    &=
    \sum_{\xip\in P_h\setminus P_h^K} \mu_{h,\beta_h}(A_\xip\times E)
    \leq 
    \bar c \,
    {\rm e}^{\beta_h \epsilon_h^2}2^{N} 
     \sum_{\xip\in P_h\setminus P_h^K}  \mu_{\beta_h/2}(A_\xip\times E)\notag\\
&    = 
    \bar c \,
    {\rm e}^{\beta_h \epsilon_h^2}2^{N} 
     \mu_{\beta_h/2}((Z\setminus K_h)\times E)\,.
\end{align}
Recalling that  $K\subseteq K_h$,
for $h$ sufficiently large that $\beta_h\ge2\beta_0$ one has
\begin{equation}
\begin{split}
      \mu_{\beta_h/2}((Z\setminus K_h)\times E )
    &\leq 
    \mu_{\beta_h/2}((Z\setminus K)\times Z )
    \le 
    \mu_{\beta_h/2}(Z\times Z \setminus K\times L )
    \leq 
    \delta .
\end{split}
\end{equation}
Together with assumption \ref{itbdtighte2b}, we conclude that \eqref{bStep1} holds.

To prove \eqref{bStep2}, we 
compute as in \eqref{eqmuhbetah}, 
using  \ref{itbdtightdelta},
\begin{equation}\label{eqmuhbetahb}
\begin{split}
\mu_{h,\beta_h} (P^K_h\times (Z \setminus L'))
    =&
    \sum_{\xip\in P_h^K}
    \int_{E\setminus L'}
        m_\xip
        B_{\beta_h}^{-1}
        {\rm e}^{-\beta_h \|\xip - z\|^2}
    \, d\mathcal{H}^N(z)\\
    \le& 
    \sum_{\xip\in P_h^K}
    \int_{E\setminus L'}
        \bar c \mu_D(A_p)
        B_{\beta_h}^{-1}
        {\rm e}^{-\beta_h \|\xip - z\|^2}
    \, d\mathcal{H}^N(z)\,.
    \end{split}
\end{equation}
The conditions $p\in P^K_h\subseteq K'$, $z\in E\setminus L'$ and the definition of $L'$ imply
\begin{equation}\label{eqpepzcst}
 \|p-z\|> C_*.
\end{equation} 
We next show that 
there is $a\in(0,1)$ such that
for any $z\in E$, any $h$, any $\xip\in P_h$, 
if \ref{itbdtightcst2} and \eqref{eqpepzcst}  hold then
\begin{equation}\label{eqcellbrsdf}
{\rm e}^{- \beta_h\|p-z\|^2} \le  G:=
\frac1{ \mu_D(A_\xip)}
\int_{A_\xip} {\rm e}^{-a \beta_h\|y-z\|^2} d\mu_D(y) .
\end{equation}
To see this, we 
estimate, using
again
convexity of $t\mapsto -\log t$,
\begin{equation}
 -\log G
 \le \frac{1}{\mu_D(A_\xip)}
 \int_{A_\xip} a\beta_h\|y-z\|^2 d\mu_D(y) .
\end{equation}
With $\|y-z\|^2\le 2 \|y-p\|^2+2\|p-z\|^2$
and 
\ref{itbdtightcst2} this becomes
\begin{equation}
 -\log G
 \le 
 2a\beta_h \epsilon_h^2+2a\beta_h\|p-z\|^2
,
\end{equation} 
and with \eqref{eqpepzcst} and 
$\epsilon_h\le 1$
we obtain
\begin{equation}
 -\log G
 \le 2a\left(1+C_*^{-2}\right)\beta_h\|p-z\|^2\,.
\end{equation}
We finally choose $a:=
(1+C_*^{-2})^{-1}/2
$ and taking an exponential on both sides concludes the proof of \eqref{eqcellbrsdf}.
Using \eqref{eqcellbrsdf} and \eqref{eqmuhbetahb} gives for $a\beta_h\ge\beta_0$,
\begin{equation}
\begin{split}
    &
    \mu_{h,\beta_h} (P^K_h\times (Z \setminus L'))
    \le 
    \bar c    
    \sum_{\xip\in P_h^K}
    \int_{E\setminus L'}              
        B_{\beta_h}^{-1}
    \int_{A_\xip} {\rm e}^{-a \beta_h\|y-z\|^2} \, d\mu_D(y) 
    \, d\mathcal{H}^N(z)
     \\ & \le
    \bar c    
    \int_{E\setminus L'}              
        B_{\beta_h}^{-1}
    \int_{Z} {\rm e}^{-a \beta_h\|y-z\|^2} \, d\mu_D(y) 
    \, d\mathcal{H}^N(z)
    = 
    \bar c a^{-N} \mu_{a\beta_h}(Z\times (E\setminus L'))
    \\ & \le
    \bar c a^{-N} \mu_{a\beta_h}(Z\times (E\setminus L))
    \le 
   \bar c a^{-N} \mu_{a\beta_h}(Z\times Z \setminus K\times L)
    \le 
    \bar c a^{-N} \delta ,
\end{split}
\end{equation}
where we used that $L\subseteq L'$ and \eqref{eqmuzzkld}. This proves \eqref{bStep2}.
 
 Combining estimates \eqref{bStep1} and \eqref{bStep2}, using that
 \begin{equation}\label{eqdecsetdiff}
  Z\times Z\setminus K'\times L'
  \subseteq 
  Z\times Z\setminus P^K_h\times L'
  =((Z\setminus P^K_h)\times Z )\bigcup 
  (P^K_h\times (Z\setminus L'))\,,
 \end{equation}
and that all measures vanish on $Z\times (Z\setminus E)$, we conclude
\begin{equation}\label{eqmudecsetdiff}
\begin{split}
    \mu_{h,\beta_h}(Z\times Z \setminus K' \times L')
    \leq&
    \mu_{h,\beta_h}((Z\setminus P^K_h)\times Z )
    +\mu_{h,\beta_h}(P^K_h\times (Z \setminus  L'))
    \leq 
    C \delta,
    \end{split}
\end{equation}
for some (different) constant $C>0$,
provided $h$ is sufficiently large. Therefore for every $\epsilon > 0$, we can choose $\delta$ small enough such that this is smaller than $\epsilon$ for all $h \geq h_0$, as required.
\end{proof}

We note that conditions \ref{itbdtightdelta}, \ref{itbdtightcst} and \ref{itbdtightcst2} stipulate precisely how the sequence $(P_h)$ of point-data sets must approximate the material likelihood measure $\mu_D$ in order to ensure convergence. Condition \ref{itbdtighte2b}   relates the sequences $(\beta_h)$ and $(\epsilon_h)$ and, specifically, requires $\beta_h$ to diverge no faster than $\epsilon_h^{-2}$. 

Finally, we turn to quantitative convergence in the flat norm. 
\begin{cor}[Convergence in flat norm]\label{y1TP50}
Suppose that the assumptions of Prop.~\ref{D7NtRs} hold. Assume in addition that there is $\delta_h\to 0$ such that, for all $\xip \in P_h$,
\begin{equation}\label{condvi}
    |m_\xip-\mu_D(A_\xip)|
    \leq
    \delta_h \, \mu_D(A_\xip) .
\end{equation}
Then for sufficiently large $h$, we have
\begin{equation}\label{th4Iw8}
    \| \mu_{h,\beta_h} - \mu_{\beta_h} \|_{\rm FN}
    \leq C
(\delta_h + \beta_h^{1/2}\epsilon_h)
\sup_{\beta\ge\beta_0} \|\mu_\beta\|_{\rm TV}.
\end{equation}
\end{cor}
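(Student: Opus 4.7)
The plan is to start from the variational characterization $\|\mu_{h,\beta_h}-\mu_{\beta_h}\|_{\rm FN}=\sup\{\int f\,d(\mu_{h,\beta_h}-\mu_{\beta_h}):\|f\|_{\rm BL}\le 1\}$ and bound the integral uniformly in $f$. Writing $\mu_{h,\beta_h}-\mu_{\beta_h}=w_{\beta_h}\cdot((\mu_{D,h}-\mu_D)\times\mu_E)$ and $\mu_{D,h}-\mu_D=\sum_{p\in P_h}(m_p\delta_p-\mu_D\LL A_p)$, I add and subtract $\mu_D(A_p)f(p,z)w_{\beta_h}(p,z)$ inside each cell to split the integral as $I_1+I_2$ with
\begin{equation*}
    I_1:=\sum_{p\in P_h}(m_p-\mu_D(A_p))\int_E f(p,z)w_{\beta_h}(p,z)\,d\calH^N(z),
\end{equation*}
\begin{equation*}
    I_2:=\sum_{p\in P_h}\int_{A_p}\!\int_E\bigl[f(p,z)w_{\beta_h}(p,z)-f(y,z)w_{\beta_h}(y,z)\bigr]\,d\calH^N(z)\,d\mu_D(y).
\end{equation*}
The term $I_1$ captures the weight error (controlled by $\delta_h$ through \eqref{condvi}); $I_2$ captures the spatial-resolution error (controlled by $\epsilon_h$).

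The bound on $I_1$ is immediate: from \eqref{condvi} and $\|f\|_\infty\le 1$, $|I_1|\le\delta_h\sum_p\mu_D(A_p)\int_E w_{\beta_h}(p,z)\,d\calH^N(z)$, and running the Jensen-type argument \eqref{3VyqRF}--\eqref{eqbdmuhbetahdf} used in Proposition~\ref{D7NtRs} bounds the right-hand side by $C\,e^{\beta_h\epsilon_h^2}\|\mu_{\beta_h/2}\|_{\rm TV}$, so by \ref{itbdtighte2b} we get $|I_1|\le C\delta_h\sup_{\beta\ge\beta_0}\|\mu_\beta\|_{\rm TV}$. Inside $I_2$ I further decompose the bracket as $(f(p,z)-f(y,z))w_{\beta_h}(p,z)+f(y,z)(w_{\beta_h}(p,z)-w_{\beta_h}(y,z))$. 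The ``$f$-Lipschitz'' piece is controlled by ${\rm Lip}(f)\le 1/\ell$ and the Cauchy--Schwarz consequence $\int_{A_p}\|p-y\|\,d\mu_D\le\epsilon_h\mu_D(A_p)$ of \ref{itbdtightcst2}; the same summation trick as for $I_1$ yields a contribution $\le C\epsilon_h\sup_{\beta\ge\beta_0}\|\mu_\beta\|_{\rm TV}$, which for $\beta_h$ large is absorbed into the target $C\sqrt{\beta_h}\epsilon_h$-bound.

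The main obstacle is the remaining ``$w_{\beta_h}$-Lipschitz'' piece, which must generate the extra $\sqrt{\beta_h}$. Using $|e^{-a}-e^{-b}|\le|a-b|(e^{-a}+e^{-b})$ with $a=\beta_h\|p-z\|^2$, $b=\beta_h\|y-z\|^2$, the identity $|a-b|\le\beta_h\|p-y\|(\|p-z\|+\|y-z\|)$, and the elementary bound $\sqrt{\beta}\,r\,e^{-\beta r^2/2}\le C$ for $r\ge 0$, one derives the pointwise estimate
\begin{equation*}
    |w_{\beta_h}(p,z)-w_{\beta_h}(y,z)|\le C\bigl(\sqrt{\beta_h}\|p-y\|+\beta_h\|p-y\|^2\bigr)\bigl(w_{\beta_h/2}(p,z)+w_{\beta_h/2}(y,z)\bigr),
\end{equation*}
the factor $2^N$ from $B_{\beta_h}^{-1}=2^N B_{\beta_h/2}^{-1}$ being absorbed into $C$. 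Integrating this over $A_p\times E$, Cauchy--Schwarz against \ref{itbdtightcst2} turns each $\|p-y\|^k$ into $\epsilon_h^k\mu_D(A_p)$ (times a Gaussian weight). The sums involving $w_{\beta_h/2}(p,z)$ are closed exactly as in \eqref{3VyqRF}--\eqref{eqbdmuhbetahdf}; those involving $w_{\beta_h/2}(y,z)$ first require swapping the Gaussian decay at $y$ for decay at $p$, which is accomplished by applying \ref{itbdtightcst} with $z=P_E(y)$ to obtain $\|p-P_E(p)\|^2\le c_*^2(\epsilon_h^2+\|y-P_E(y)\|^2)$ and absorbing the harmless $e^{O(\beta_h\epsilon_h^2)}$ prefactor via \ref{itbdtighte2b}, after which a further Jensen step reduces everything to $\|\mu_\beta\|_{\rm TV}$ for some $\beta\ge\beta_0$. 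This piece thus contributes at most $C(\sqrt{\beta_h}\epsilon_h+\beta_h\epsilon_h^2)\sup_{\beta\ge\beta_0}\|\mu_\beta\|_{\rm TV}$, and since $\beta_h\epsilon_h^2\le C'\sqrt{\beta_h}\epsilon_h$ by \ref{itbdtighte2b}, combining all contributions and taking the supremum over $f$ yields \eqref{th4Iw8}.
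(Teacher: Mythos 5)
Your proof is correct and follows the same overall strategy as the paper's: split $\int f\,d(\mu_{h,\beta_h}-\mu_{\beta_h})$ cellwise into a weight-error term (your $I_1$, the paper's $A^h$) controlled by $\delta_h$ and a resolution-error term (your $I_2$, the paper's $B^h$); inside $I_2$, separate an $f$-Lipschitz piece from a $w_{\beta_h}$-Lipschitz piece; close everything via Cauchy--Schwarz against \ref{itbdtightcst2}, the elementary bound absorbing $\sqrt{\beta_h}\|p-z\|$ into the Gaussian, and the Jensen step \eqref{3VyqRF}--\eqref{eqbdmuhbetahdf} relating $\sum_p\mu_D(A_p)e^{-\beta\|p-P_E p\|^2}$ to $\|\mu_{\beta/2}\|_{\rm TV}$. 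The one genuine variation is in the $w_{\beta_h}$-Lipschitz piece: the paper writes the difference of exponentials in integral Taylor form \eqref{eqzxiespfd} and bounds $\int_0^1 e^{-\beta_h(t\|p-z\|^2+(1-t)\|y-z\|^2)}\,dt$ by $D(p,z)=e^{\beta_h\epsilon_h^2}e^{-\beta_h\|p-z\|^2/c_*^2}$ using \ref{itbdtightcst} once, which converts all $y$-dependent Gaussian decay into $p$-centered decay in a single step; you instead use the symmetric inequality $|e^{-a}-e^{-b}|\le |a-b|(e^{-a}+e^{-b})$, which leaves a $w_{\beta_h/2}(y,z)$ contribution that must then be swapped to $p$-centered decay via a second application of \ref{itbdtightcst}. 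Both routes cost the same, since the $e^{O(\beta_h\epsilon_h^2)}$ prefactors produced by either invocation of \ref{itbdtightcst} are harmless under \ref{itbdtighte2b}; the paper's version is marginally more economical because the $D(p,z)$ bound avoids the extra swap, but yours is equally valid and does not need the integral form of the remainder.
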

\begin{remark}\label{remassprop} 
Assumption 
\eqref{condvi} obviously implies assumption \ref{itbdtightdelta}.
\end{remark}
\begin{proof}[Proof of Cor.~\ref{y1TP50}]
For any $f\in BL(Z\times Z)$ with $\|f\|_\infty\le 1$ and ${\rm Lip} f\le1/\ell$,
\begin{equation}
\begin{split}
    &
    \int_{Z\times Z} f(y,z) d(\mu_{h,\beta_h} - \mu_{\beta_h})(y,z)
    \\ &=
    \int_E
        \sum_{\xip\in P_h}
        \Big(
            m_\xip f(\xip,z)
            B_{\beta_h}^{-1}
            {\rm e}^{-\beta_h \|\xip - z\|^2}
        - 
        \int_{A_\xip}
            f(y,z)
            B_{\beta_h}^{-1}
            {\rm e}^{-\beta_h \|y - z\|^2}
        \, d\mu_D(y)
    \Big)
    \, d\mathcal{H}^N(z) .
\end{split}
\end{equation}
We write, recalling $\|f\|_\infty\le 1$,
\begin{equation}\label{oS852h}
\begin{split}
    &
    \Big| \int_{Z\times Z} f(y,z) d(\mu_{h,\beta_h} - \mu_{\beta_h})(y,z) \Big|
    \leq  A^h + B^h,
\end{split}
\end{equation}
with
\begin{equation}\label{oS852hn}
\begin{split}
    A^h:=  &
    \int_E
        \sum_{\xip\in P_h}
            | m_\xip - \mu_D(A_\xip)|
            B_{\beta_h}^{-1}
            {\rm e}^{-\beta_h \|\xip - z\|^2}
    \, d\mathcal{H}^N(z),
     \\ B^h:=&
    \int_E
        \sum_{\xip\in P_h}
        \int_{A_\xip}
            B_{\beta_h}^{-1}
            \Big|
                f(\xip,z)
                {\rm e}^{-\beta_h \|\xip - z\|^2}
                - 
                f(y,z)
                {\rm e}^{-\beta_h \|y - z\|^2}
            \Big|
        \, d\mu_D(y)
    \, d\mathcal{H}^N(z) .
\end{split}
\end{equation}
The first term is estimated  using \eqref{condvi},
\begin{equation}\label{eqbdah}
\begin{split}
    &
    A^h \leq 
    \frac{\delta_h}{1-\delta_h}
    \int_E
        \sum_{\xip\in P_h}
            m_\xip
            B_{\beta_h}^{-1}
            {\rm e}^{-\beta_h \|\xip - z\|^2}
    \, d\mathcal{H}^N(z)
    =
    \frac{\delta_h}{1-\delta_h} \| \mu_{h,\beta_h} \|_{\rm TV} .
\end{split}
\end{equation}
Next, we proceed to estimate the second term in the bound. Writing
\begin{equation}
\begin{split}
    &
    f(\xip,z)
    {\rm e}^{-\beta_h \|\xip - z\|^2}
    -
    f(y,z)
    {\rm e}^{-\beta_h \|y - z\|^2}
      \\ &=
    ( f(\xip,z) - f(y,z) )
    {\rm e}^{-\beta_h \|\xip - z\|^2}
    +
    f(y,z)
    \Big(
        {\rm e}^{-\beta_h \|\xip - z\|^2}
        -
        {\rm e}^{-\beta_h \|y - z\|^2}
    \Big)\, ,
\end{split}
\end{equation}
and recalling ${\rm Lip} f\le1/\ell$, $\|f\|_\infty\le 1$,  we obtain
\begin{equation}\label{mV9mLr}
\begin{split}
    B^h
    & \le
    \int_E
        \sum_{\xip\in P_h}
        \int_{A_\xip}
            B_{\beta_h}^{-1}
            \frac{1}{\ell}\| \xip - y \|
            {\rm e}^{-\beta_h \|\xip - z\|^2}
        \, d\mu_D(y)
    \, d\mathcal{H}^N(z)
    \\ & +
    \int_E
        \sum_{\xip\in P_h}
        \int_{A_\xip}
            B_{\beta_h}^{-1}
            \Big|
                {\rm e}^{-\beta_h \|\xip - z\|^2}
                -
                {\rm e}^{-\beta_h \|y - z\|^2}
            \Big|
        \, d\mu_D(y)
    \, d\mathcal{H}^N(z) .
\end{split}
\end{equation}
In order to estimate the last term, we compute the remainder term in Taylor's expansion as
\begin{equation}\label{eqzxiespfd}
\begin{split}
            {\rm e}^{-\beta_h \|\xip - z\|^2}
            -
            {\rm e}^{-\beta_h \|y - z\|^2}
=\beta_h (\|y - z\|^2-\|\xip - z\|^2)\int_0^1 
            {\rm e}^{-\beta_h (t\|\xip - z\|^2+(1-t) \|y - z\|^2)} dt
\end{split}
\end{equation}
and since by Assumption~\ref{itbdtightcst} we have
$-\|y-z\|^2
\le \epsilon_h^2-\|p-z\|^2/c_*^2$
for all $y\in A_\xip$, and $c_*\ge1$,
\begin{equation}
\int_0^1{\rm e}^{-\beta_h (t\|\xip - z\|^2+(1-t) \|y - z\|^2)}dt  \le 
D(p,z):={\rm e}^{\beta_h \epsilon_h^2}
{\rm e}^{-\beta_h \|\xip - z\|^2/c_*^2}.
\end{equation}
Factorizing the first term in \eqref{eqzxiespfd} and then using a triangular inequality,
\begin{equation}\label{eqzxiespfd3}
\begin{split}
        \Big|
            {\rm e}^{-\beta_h \|\xip - z\|^2}
            -
            {\rm e}^{-\beta_h \|y - z\|^2}
        \Big|
&\le D(p,z) 
\beta_h \|y-p\| \, (\|y-z\|+\|p-z\|)
\\
&\le D(p,z) 
(\beta_h \|y-p\|^2 + 2\beta_h  \|y-p\|\,\|p-z\|).
\end{split}
\end{equation}
Integrating \eqref{eqzxiespfd3} and using Hölder and Assumption \ref{itbdtightcst2},
\begin{equation}\label{eqzxiespfd4}
\begin{split}
    &
     \int_{A_\xip}
        \Big|
            {\rm e}^{-\beta_h \|\xip - z\|^2}
            -
            {\rm e}^{-\beta_h \|y - z\|^2}
        \Big|
     \, d\mu_D(y)
\le 2D(p,z) \beta_h\epsilon_h(\epsilon_h+\|p-z\|  )\mu_D(A_p).
\end{split}
\end{equation}
In order to incorporate the term $\|p-z\|$ into the exponential, we remark that   $\log t\le t-1\le t^2/2$ implies
$t{\rm e}^{-t^2}\le {\rm e}^{-t^2/2}$ for all $t>0$. Using this with $t=\beta_h^{1/2}\|\xip-z\|/c_*$ yields
\begin{equation}\begin{split}
      {\rm e}^{-\beta_h \|\xip - z\|^2/c_*^2}   
        \beta_h^{1/2} \|p-z\|
        \le c_*
  {\rm e}^{-\beta_h \|\xip - z\|^2/(2c_*^2)}.
 \end{split}
\end{equation}
For the other term in \eqref{eqzxiespfd4} we use  $\beta_h^{1/2}\epsilon_h\le C^{1/2}$. We conclude, 
\begin{equation}\label{eqbhsectif}
\begin{split}
    &
    \int_{A_\xip}
        \Big|
            {\rm e}^{-\beta_h \|\xip - z\|^2}
            -
            {\rm e}^{-\beta_h \|y - z\|^2}
        \Big|
    \, d\mu_D(y)
    \leq 
    c'' \beta_h^{1/2}\epsilon_h        {\rm e}^{-\beta_h \|\xip - z\|^2/(2c_*^2)}
    \mu_D(A_\xip) ,
\end{split}
\end{equation}
for all $z$ and all $\xip\in P_h$, with $c''\ge1$ a constant depending on $C$ and $c_*$. 
We turn back to
(\ref{mV9mLr}), use Hölder and 
Assumption \ref{itbdtightcst2} in the first term, and \eqref{eqbhsectif} in the second one, as well as
$\mu_D(A_\xip)\le m_\xip/(1-\delta_h)$. We obtain
\begin{equation}\label{eqbhfin}
    B^h
    \le
    c''    
    \int_E
        \sum_{\xip\in P_h}\frac{m_\xip}{1-\delta_h}
            B_{\beta_h}^{-1}
            \Big(
                \frac{{\epsilon_h}}{\ell}
                +
                \beta_h^{1/2} \epsilon_h
            \Big)
            {\rm e}^{-\frac{\beta_h}{2c_*^2} \|\xip - z\|^2}
    \, d\mathcal{H}^N(z) .
\end{equation}
Combining \eqref{oS852h}, \eqref{eqbdah}, \eqref{eqbhfin} and \eqref{eqmuhbetah},
\begin{equation}\begin{split}
    \Big|
        \int_{Z\times Z} f d(\mu_{h,\beta_h} - \mu_{\beta_h})
    \Big|
    \leq &
    \frac{\delta_h}{1-\delta_h} \| \mu_{h,\beta_h} \|_{\rm TV}
    + 
        \frac{c'''}{1-\delta_h}
            \Big(
                \frac{\epsilon_h}{\ell}
                +
                \beta_h^{1/2} \epsilon_h
            \Big)
\| \mu_{h,\beta_h/(2c_*^2)} \|_{\rm TV}.
 \end{split}           \end{equation}
Using \eqref{eqbdmuhbetahdf} with $\beta=\beta_h$ and with $\beta=\beta_h/(2c_*^2)$ and then \ref{itbdtighte2b} and $\delta_h\le\frac12$ gives
\begin{equation}\begin{split}
    \Big|
        \int_{Z\times Z} f d(\mu_{h,\beta_h} - \mu_{\beta_h})
    \Big|
    & \leq 
    \tilde c {\delta_h}
    \| \mu_{\beta_h/2}  \|_{\rm TV}
+
        \tilde c 
            \Big(
                \frac{\epsilon_h}{\ell}
                +
                \beta_h^{1/2} \epsilon_h
            \Big)
\| \mu_{\beta_h/(4c_*^2)}  \|_{\rm TV}
\end{split}
\end{equation}
for some $\tilde c>0$.
We conclude using \ref{itbdtightub2}
and recalling that $\beta_h\to\infty$.
\end{proof}

We note that, under the conditions of Prop.~\ref{D7NtRs} and Cor.~\ref{y1TP50}, convergence in the flat norm is attained if: $\delta_h \to 0$, which ensures that the discrete measures in $\mu_{D,h}$ carry the right amount of mass in the limit; $\epsilon_h\to 0$, corresponding to an increasingly fine discretization of $\mu_D$; and $\beta_h\epsilon_h^2 \to 0$, which requires $\beta_h$ to diverge to $+\infty$ more slowly than $\epsilon_h^{-2}$. This estimate permits to render the decoupling $\mu_{h,\beta_h} - \mu_\infty = (\mu_{h,\beta_h} - \mu_{\beta_h}) + (\mu_{\beta_h} - \mu_\infty)$ quantitative. Thus, consider for instance the case in which $\mu_{D,h}$ approximates a measure $\mu_D=e^{-\Phi}\mathcal{L}^{2N}$, with $\Phi$ satisfying both \eqref{9z2bLs} and \eqref{ceBj5Z}. In addition, choose $m_p=\mu_D(A_p)$ for simplicity, yielding $\delta_h=0$. Then, the bounds (\ref{Kgx9hb}) and (\ref{th4Iw8}) lead (for large $h$) to the estimate \eqref{eqintrooptbeta}, whereupon a minimization of the bound gives $\beta_h \sim \epsilon_h^{-1}$ and $\| \mu_{h,\beta_h} - \mu_{\beta_h} \|_{\rm FN} \sim \epsilon_h^{1/2}$. These estimates simultaneously set forth the optimal annealing rate and the resulting convergence rate in the flat norm, as anticipated in Section~\ref{sec:intro}.

\section{Example: Transportation networks}\label{Waxd2V}

As an illustrative example of application, we consider a transportation network, e.~g., pipeline systems, traffic networks, electrical circuits and grids, consisting of $n$ free nodes and $N$ oriented edges. Every free node in the network carries a potential. In addition, the network may contain grounded nodes or nodes connected to a source, which are held at a fixed potential. We denote by $u \in \mathbb{R}^n$ the array of free-node potentials, by $\varepsilon \in \mathbb{R}^N$ the array of potential differences along the edges and by $\sigma \in \mathbb{R}^N$ the corresponding fluxes. 

\subsection{Field equations}
The nodal potentials, edge potential differences and edge currents satisfy the conservation and compatibility relations
\begin{equation}\label{Mnhw8F}
    B^T \sigma = f  \hskip5mm\text{ and } \hskip5mm
    \varepsilon = B u + g ,
\end{equation}
with sources $f\in \mathbb{R}^n$, applied potential differences $g \in \mathbb{R}^N$ and connectivity matrix $B \in \mathbb{R}^{N\times n}$. The corresponding phase space is $Z = \mathbb{R}^N \times \mathbb{R}^N$, metrized by the norm
\begin{equation}\label{zQpQFw}
    \| z \|
    :=
    \left(
    \sum_{e=1}^N
        \Big(
            \mathbb{C}_e |\varepsilon_e|^2
            +
            \mathbb{C}_e^{-1} |\sigma_e|^2
        \Big)
    \right)^{1/2} ,
\end{equation}
with coefficients $\mathbb{C}_e > 0$, $e=1,\dots,N$. 
Here and below, we write  $z=(\epsilon,\sigma)=((\varepsilon_1,\sigma_1),\dots,(\varepsilon_N,\sigma_N))$.
The conditions (\ref{Mnhw8F})
define the set of admissible states 
\begin{equation}\label{Rd5zWa}
    E := \{ z=(\varepsilon,\sigma)\in Z \, : \, \varepsilon = B u + g,\ B^T \sigma = f \} ,
\end{equation}
parametrized by $f \in \mathbb{R}^n$ and $g \in \mathbb{R}^N$. We assume that $f$ and $g$ are deterministic and, hence, the corresponding constraint measure $\mu_E$ is (\ref{eqmuedetermintr}).
We further assume that the matrices $B$ and  $\mathbb{C}:={\rm diag}\{\mathbb{C}_1, \dots, \mathbb{C}_N\}$ obey the non-degeneracy condition
\begin{equation}\label{eqnetcomp}
 B^T \mathbb{C} B > 0
\end{equation}
(i.~e., $B^T \mathbb{C} B$ is a positive definite matrix). This condition ensures existence and uniqueness of the classical solution, which is characterized by $\sigma=\mathbb C\epsilon$, $(\epsilon,\sigma)\in E$. Indeed, inserting in (\ref{Mnhw8F}) one obtains
$u=(B^T\mathbb C B)^{-1}(f-B^T\mathbb C g)$, 
with $\epsilon$ and $\sigma$ depending linearly on $u$.

\subsection{Material likelihood}
Suppose that the material behavior of every edge $e$ is characterized by a local material likelihood measure of the form
\begin{equation}\label{4JJeZV}
    \mu_{D,e}
    =
    {\rm e}^{-\Phi_{D,e}}
\, \mathcal{L}^2 =
    \exp
    \Big(
        -
        \frac{\mathbb{C}_e^{-1}}{2s_e^2}
        | \sigma_e-\mathbb{C}_e\varepsilon_e |^2
    \Big)
    \, \mathcal{L}^2 ,
\end{equation}
parameterized by $s_e > 0$. Thus, $\mu_{D,e}$ is a sliding Gaussian measure, i.~e., is Gaussian in the variable $\sigma_e - \mathbb{C}_e\varepsilon_e$ and invariant under translations along the line $\sigma_e = \mathbb{C}_e\varepsilon_e$. We note that $\mu_{D,e}$ is not finite, much less a probability measure. 
The local material likelihood measure (\ref{4JJeZV}) represents a stochastic Ohm's law for electrical circuits, a stochastic Darcy-Weisbach law for pipeline networks, and similar laws for other physical systems. The global material data measure is 
  $  \mu_D
    =
    \prod_{e=1}^N
    \mu_{D,e} $.

\subsection{Inference problem}

We verify that  the conditions for transversality stated in Prop.~\ref{p5qmD1} and Prop.~\ref{VkH97k}
are indeed satisfied. Thus, from (\ref{4JJeZV}) we compute the (global) material logarithmic potential as
\begin{equation}
    \Phi_D(y)=
        \sum_{e=1}^N \Phi_{D,e}(\varepsilon_e,\sigma_e)
    =
    \sum_{e=1}^N
        \frac{\mathbb{C}_e^{-1}}{2s_e^2}
         | \sigma_e-\mathbb{C}_e\varepsilon_e |^2 .
\end{equation}
In order to show that $\mu_D$ is sub-Gaussian, we first argue that 
 there are $c',b'>0$ such that
\begin{equation}\label{p38xEk}
    \| z \|^2
    \le 
    c'    
    \Phi_D(z)
    +
    b' ,
    \quad
    \forall z \in E .
\end{equation}
We start by showing that $\Phi_D$ is strictly positive on any nonzero element of $E_0$ (the linear space obtained by translation of $E$ to the origin). Assume $(\eps,\sigma)\in E_0$, which requires that $\eps=Bu$ for some $u\in\R^n$ and $B^T\sigma=0$. If $\Phi_D((\eps,\sigma))=0$, then $\sigma=\mathbb C \eps$, which implies $B^T\mathbb C Bu=0$. In turn, by \eqref{eqnetcomp}  this implies $u=0$, hence $(\eps,\sigma)=0$. Therefore, the restriction of $\Phi_D$ to $E_0$ is a strictly positive-definite quadratic form, and there is $c_1>0$ such that
$c_1\|\eta\|^2\le \Phi_D(\eta)$ for all $\eta\in E_0$. We now pick $e_0\in E\cap E_0^\perp$, write $z=\eta+e_0$ with $\eta\in E_0$, and use again that $\Phi_D$ is a quadratic form to write
\begin{equation}
 \Phi_D(\eta+e_0)=\Phi_D(e_0)+\Phi_D(\eta)+ D\Phi_D(e_0)(\eta) \ge c_1 \|\eta\|^2 - c_2\|\eta\|\ge \frac12 c_1\eta^2 -\frac{c_2^2}{2c_1},
\end{equation}
where $c_2$ is the operator norm of $D\Phi_D(e_0)$.
As $\|z\|^2=\|e_0\|^2+\|\eta\|^2$, this proves 
\eqref{p38xEk} with $c'=2/c_1$ and $b'=\|e_0\|^2+\frac{c_2^2}{c_1^2}$.

Similarly, there is $c''>0$ such that for any $y$ and $z$ we have
\begin{equation}\label{eqPhiD}
 \Phi_D(z)\le 2\Phi_D(z-y)+2\Phi_D(y)
 \le c'' \|z-y\|^2+2\Phi_D(y).
\end{equation}
Using 
$\|y\|^2+\|z\|^2 \le 3\|z\|^2+2\|y-z\|^2$,
then from \eqref{p38xEk} and \eqref{eqPhiD},
we obtain
\begin{equation}
\|y\|^2+\|z\|^2 \le 3c' \Phi_D(z)+3b'+2\|y-z\|^2
\le 
6c'\Phi_D(y)+3b'+(2+3c'c'')\|y-z\|^2.
\end{equation}
Therefore, there are $\beta_0,c,b>0$ such that
\begin{equation}
    c
    \big(
        \| y \|^2 + \| z \|^2
    \big)
\le
        \beta_0 \| y - z \|^2
    +
    \Phi_D(y)+b,
    \quad
    \forall z \in E , y\in Z,
\end{equation}
which is condition \eqref{9z2bLs}.
By Prop.~\ref{p5qmD1} we obtain that the measures are uniformly bounded and uniformly tight, hence transversal.
 
In order to use Prop.~\ref{VkH97k}, we observe that $\Phi_D\in C^\infty$ and  the growth condition \eqref{ceBj5Z} holds with $u(\xi):=\|\xi\|$ and $\gamma=1$, since $\Phi_D$ is quadratic in its arguments. From (\ref{8ddOkTdm}), the diagonal concentration of $\mu$ is defined by the property that
\begin{equation}\label{MEE3bQ}
    \int_{Z\times Z} f(y,z) \, d\mu_\infty(y,z)
    =
    \int_E
        f(\xi,\xi)
        \, {\rm e}^{-\Phi_D(\xi)}
    \, d\mathcal{H}^N(\xi) ,
\end{equation}
for all $f \in C_b(Z\times Z)$,
with rate of convergence as in (\ref{Kgx9hb}).
The total variation of $\mu_\infty$ 
is positive and finite in view of (\ref{p38xEk}) and, therefore, the expectation of a quantity of interest $f \in C_b(Z\times Z)$ follows by normalization as
\begin{equation}\label{eqexpectation}
    \mathbb{E}_\infty(f) = 
    \frac{1}{\|\mu_\infty\|_{\mathrm {TV}}}
    \int_{Z\times Z} f(y,z) \, d\mu_\infty(y,z)\,.
\end{equation}

\subsection{Approximation by empirical data}
We finally approximate  $\mu_D$ by discrete measures $\mu_{D,h}$ of the form (\ref{muDh}), and then apply Prop.~\ref{D7NtRs} and Cor.~\ref{y1TP50}. For every $h$ we define below a partition $\mathcal{A}_{e,h} = \{A_{\xip_e} \, : \, \xip_e \in P_{e,h} \}$ of $Z_e=\mathbb{R}^2$
on scale $\epsilon_{e,h}>0$. The  global material data set is then $P_h := \prod_{e=1}^N P_{e,h}$ and the partition of $Z$ is $\mathcal{A}_{h} := \{A_{\xip} = \prod_{e=1}^N A_{\xip_e} \, : \, \xip_e \in P_{e,h} \}$.  Setting $\epsilon_h:=(\sum_e \epsilon_{e,h}^2)^{1/2}$, we proceed to check Assumption~\ref{itbdtightcst2} and Assumption ~\ref{itbdtightcst} elementwise. Specifically, we verify \ref{itbdtightcst} for all $z\in Z$, not only for $z\in E$, since $E$ cannot be characterized elementwise.

We focus on a single element $e$ in the network and define $T_e:Z_e\to\R^2$, 
\begin{equation}
 T_e(\varepsilon_e,\sigma_e):=
\left(\frac{\varepsilon_e\sqrt{\C_e}}{\sqrt2}+
 \frac{\sigma_e}{\sqrt2\sqrt{\C_e}},
\frac{\varepsilon_e\sqrt{\C_e}}{\sqrt2}-
 \frac{\sigma_e}{\sqrt2\sqrt{\C_e}}\right),
 \end{equation}
so that 
$|T_e(y)|=\|y\|_e$,
with $|\cdot|$ the standard Euclidean norm
and $\|\cdot\|_e$ defined implicitly in \eqref{zQpQFw}.
We set $P_{e,h}:=\{p_e: T_e(p_e)\in \epsilon_{e,h}\Z^2\}$. For the rest of the construction, we
drop $h$ and $e$ from the notation.
Set
$A_{p}:=\{y: T(y)\in T(p)+(-\frac12\epsilon,\frac12\epsilon]^2\}$, which
defines a partition of $\R^2$ which obeys $p\in A_{p}$. Furthermore,
$\|y-p\|=|T(y-p)|\le \epsilon$ for all $y\in A_p$, so that
Assumption \ref{itbdtightcst2} follows. Assumption  \ref{itbdtightcst} holds with $c_*=\sqrt2$, since
$\|p-z\|^2\le 2
\|y-p\|^2+ 2
\|y-z\|^2$.

Assumption \ref{itbdtightub2} of Prop.~\ref{D7NtRs} follows directly from Prop.~\ref{p5qmD1}, since $\mu_D$ is sub-Gaussian. 
Setting $m_{p}:=\mu_{D}(A_{p})$ we see that
Assumption~\ref{itbdtightdelta} and
\eqref{condvi} hold, with $\bar c=1$ and $\delta_h=0$.
Finally, we choose an annealing schedule $\beta_h$ such that $\beta_h \epsilon_h^2 \to 0$, ensuring that Assumption \ref{itbdtighte2b} holds. Specifically, we may choose $\beta_h \sim \epsilon_h^{-1}$, as suggested by (\ref{eqintrooptbeta}) to obtain an approximation with the claimed quantitative convergence rate.

\section*{Acknowledgments}

This work was funded by the Deutsche Forschungsgemeinschaft (DFG, German Research Foundation) {\sl via} project 211504053 - SFB 1060; project 441211072 - SPP 2256; and project 390685813 -  GZ 2047/1 - HCM.

 \bibliography{biblio}
 \bibliographystyle{siamplain}

\end{document}